\numberwithin{equation}{section}
\newtheorem{thm}[theorem]{Theorem}
\newtheorem{example}[theorem]{Example}
\newtheorem{rmk}[theorem]{Remark}
\DeclareMathOperator{\Div}{div}
\DeclareMathOperator{\Grad}{\nabla}
\DeclareMathOperator{\ssum}{\textstyle \sum}
\DeclareMathOperator{\diag}{diag}
\newcommand{\inner}[2]{\langle #1, #2 \rangle}
\newcommand{\foralls}{\forall \,}
\newcommand{\R}{{\mathbb R}}
\newcommand{\strain}{\varepsilon}
\title{Parameter robust preconditioning by congruence for multiple-network poroelasticity \thanks{Submitted to the editors \today. \funding{The work of J.~J.~Lee has been supported by the European Research Council under the European Union's Seventh Framework Programme (FP7/2007-2013) ERC grant agreement 339643. The work of K.-A. Mardal, T. Thompson and M.~E.~Rognes have been supported by the Research Council of Norway under the FRINATEK Young Research Talents Programme through project \#250731/F20 (Waterscape). E.~Piersanti is a doctoral fellow in the Simula-UCSD-University of Oslo Research and PhD training (SUURPh) program, an international collaboration in computational biology and medicine funded by the Norwegian Ministry of Education and Research.}}}
\author{E. Piersanti\thanks{Simula Research Laboratory, P. O. Box 134, 1325 Lysaker, Norway (\email{eleonora@simula.no})}
  \and J. J. Lee\thanks{Department of Mathematics, Baylor University, One Bear Place \# 97328, Waco, Texas 78798, USA (\email{jeonghun\_lee@baylor.edu})}
 \and T.~Thompson \thanks{Simula Research Laboratory, Norway. Currently at: Mathematical Institute, Oxford University (\email{thompsont@maths.ox.ac.uk})}
  \and K.-A. Mardal\thanks{Department of Mathematics, University of Oslo, P. O. Box 1053 Blindern, 0316 Oslo, Norway and Simula Research Laboratory, P. O. Box 134, 1325 Lysaker, Norway (\email{kent-and@simula.no})}
  \and M. E. Rognes\thanks{Simula Research Laboratory, P. O. Box 134, 1325 Lysaker, Norway (\email{meg@simula.no})}}
\begin{document}

\maketitle
\begin{abstract}
  The mechanical behaviour of a poroelastic medium permeated by
  multiple interacting fluid networks can be described by a system of
  time-dependent partial differential equations known as the
  multiple-network poroelasticity (MPET) equations or
  multi-porosity/multi-permeability systems. These equations
  generalize Biot's equations, which describe the mechanics of the
  one--network case. The efficient numerical solution of the MPET
  equations is challenging, in part due to the complexity of the
  system and in part due to the presence of interacting parameter
  regimes. In this paper, we present a new strategy for efficiently
  and robustly solving the MPET equations numerically. In particular,
  we introduce a new approach to formulating finite element methods
  and associated preconditioners for the MPET equations. The approach
  is based on designing transformations of variables that
  simultaneously diagonalize (by congruence) the equations' key
  operators and subsequently constructing parameter-robust
  block-diagonal preconditioners for the transformed system. Our
  methodology is supported by theoretical considerations as well as
  by numerical results.
\end{abstract}

\section{Introduction}

In this paper, we consider the preconditioned iterative solution of
finite element discretizations of the multiple-network poroelasticity
(MPET) equations. These equations traditionally originate in
geomechanics where they are also known under the term
multi-porosity/multi-permeability systems~\cite{BaiEtAl1993}. The MPET
equations generalize Biot's equations~\cite{Biot1941} from the one
network to the multiple network case, and multi-compartment Darcy
(MPT) equations~\cite{MichlerEtAl2013} from a porous (but rigid) to a
poroelastic medium. Over the last decade, the MPT and MPET equations
have seen a surge of interest in biology and physiology; e.g.~to model
perfusion in the heart~\cite{MichlerEtAl2013, LeeEtAl2015}, cancer~\cite{shipley2020four}
brain~\cite{JozsaEtAl2019} or~liver~\cite{BravsnovaEtAl2018}, or to
model the interaction between elastic deformation and fluid flow and
transport in the brain~\cite{ChouEtAl2016, EisentragerEtAl2013,
  MokhtarudinEtAl2017, TullyEtAl2013, TullyVentikos2011,
  VardakisEtAl2016}.

Concretely, the quasi--static MPET equations read as
follows~\cite{BaiEtAl1993}: for a given number of networks $J \in
\mathbb{N}$, find the displacement $u$ and the network pressures
$p_{j}$ for $j = 1, \dots, J$ such that
\begin{subequations}
  \label{eq:mpet}
  \begin{align}
    \label{eq:mpet1} 
    - \Div (2 \mu \strain(u) +\lambda \Div u \Bbb{I}) + \ssum_{j} \alpha_{j} \Grad p_{j} &= f, \\
    \label{eq:mpet2}    
    s_{j} \dot{p}_{j} + \alpha_{j} \Div \dot{u} - \Div K_{j} \Grad p_{j} + \ssum_i \xi_{j \leftarrow i} (p_j - p_i) &= g_{j},
  \end{align}
\end{subequations}
where $u = u(x, t)$, $p_{j} = p_{j}(x, t)$ for $x \in \Omega
\subset \R^{d}$ ($d = 1, 2, 3$), $t \in (0, T]$, and $\Bbb{I}$ is the $d \times d$ identity matrix. Physically,
  the equations \eqref{eq:mpet} describe a porous and elastic medium
  permeated by a number of fluid networks under the assumptions that
  the solid matrix can be modeled as isotropic and linearly elastic
  with Lam\'e constants $\mu > 0$ and $\lambda > 0$, and the transfer
  between the networks is regulated by the corresponding pressure
  differences with exchange coefficients $\xi_{j \leftarrow i} \geq
  0$. For each network $j$, we define the Biot-Willis coefficient
  $\alpha_{j} \in (0, 1]$ such that $\sum_j \alpha_{j} \leq 1$, the
    storage coefficient $s_{j} > 0$, and the hydraulic conductivity
    tensor $K_{j} = \kappa_{j}/\nu_{j} > 0$ with $\kappa_{j}$ and $\nu_{j}$ being
    the permeability and fluid viscosity, respectively. Moreover, $\Grad$ denotes
    the column-wise gradient, $\strain$ is the symmetric gradient, $\Div$
    denotes the (row-wise) divergence the superposed dot denotes the
    time derivative(s), and $I$ denotes the identity matrix. On the
    right hand side, $f$ represents body forces and $g_{j}$ sources
    (or sinks) in network $j$ for $j = 1, \dots, J$.

The MPT equations represents a reduced version
of~\eqref{eq:mpet} that result from ignoring the elastic contribution
of the solid matrix. These equations then read as follows: for a given
number of networks $J \in \mathbb{N}$, find the network pressures
$p_{j}$ for $j = 1, \dots, J$ such that
\begin{equation}
  \label{eq:mpt}
  - \Div K_{j} \Grad p_{j} + \ssum_{i=1}^J \xi_{j \leftarrow i} (p_j - p_i) = g_{j},
\end{equation}
where for $i, j = 1, \dots, J$, $p_{j} = p_{j}(x)$ for $x \in \Omega
\subset \R^{d}$ ($d = 1, 2, 3$), the parameters $K_j$ and $\xi_{j
  \leftarrow i}$ remain the hydraulic conductivity and exchange
coefficients, respectively, and $g_j$ again represents other sources
(or sinks) in each network.

\begin{center}
\begin{table}
  \begin{tabular}{lcrc}
    \toprule
    Parameter  & Unit & Value & Reference \\
    \midrule
    Hydraulic conductivities ($K_j$) & mm$^2$ (kPa s)$^{-1}$ & & \\
    \midrule
    Brain gray matter & & $2.0 \times 10^{-3}$ &  \cite{StoverudEtAl2016}\\
    Brain white matter & & $2.0 \times 10^{-2}$ &  \cite{StoverudEtAl2016} \\
    Cardiac arteries & & $1.0$ & \cite{MichlerEtAl2013} \\
    Cardiac capillaries & & $2.0$ & \cite{MichlerEtAl2013} \\
    Cardiac veins & & $10.0$ & \cite{MichlerEtAl2013} \\
    Brain vasculature & & $3.75 \times 10^{1}$ & \cite{VardakisEtAl2016} \\
    Brain fluid exchange & & $1.57 \times 10^{-2}$ & \cite{VardakisEtAl2016} \\
    \midrule
    Exchange coefficients ($\xi_{j \rightarrow i}$) & (kPa s)$^{-1}$ & & \\
    \midrule
    Brain capillary-vasculature & & $1.5 \times 10^{-16}$ & \cite{VardakisEtAl2016} \\
    Brain capillary-tissue fluid & & $2.0 \times 10^{-16}$ &\cite{VardakisEtAl2016} \\
    Brain tissue fluid-veins & & $2.0 \times 10^{-10}$ &\cite{VardakisEtAl2016} \\
    Cardiac capillary-arteries & & $2.0 \times 10^{-2}$ & \cite{MichlerEtAl2013} \\  
    Cardiac capillary-veins & & $5.0 \times 10^{-2}$ & \cite{MichlerEtAl2013} \\  
    \bottomrule
  \end{tabular}
  \caption{Sample parameter values for hydraulic conductivities and
    exchange coefficients with reference to~\eqref{eq:mpet}
    and/or~\eqref{eq:mpt}.}
  \label{tab:parameters}
\end{table}
\end{center}

The relative size of the conductivities $K_j$ and the exchange 
coefficients $\xi_{j\leftarrow i}$ may vary tremendously in applications.  
Large parameter variation is certainly present in applied problems of a 
physiological nature; a selection of representative parameter values, from 
research literature, is given in Table~\ref{tab:parameters}.  

Here, we see that the hydraulic conductivities span four orders of magnitude while
the exchange coefficients span fourteen orders of magnitude. Hence,
there is a need for preconditioners that are robust with respect
to variations in parameters.  Physiological 
applications, in particular, can benefit from preconditioners which are robust 
with respect to $K_j$, $\xi_{j\leftarrow i}$ and $\lambda$ as in \eqref{eq:mpet} 
and \eqref{eq:mpt}. 

With this in mind, parameter-robust numerical approximations and
solution algorithms for~\eqref{eq:mpet} is currently an active
research topic. In the nearly incompressible case $\lambda \gg 1$, the
standard two-field variational formulation of~\eqref{eq:mpet} is not
robust. To address this challenge, we introduced and analyzed a mixed
finite element method for the MPET equations based on a total pressure
formulation in \cite{lee2019mixed}. We note that the total pressure
in case of one network was presented in~\cite{oyarzua2016locking,LeeEtAl2017}.
Hong et al.~\cite{hong2019conservative} shortly thereafter extended 
the three-field formulation in \cite{hong2018classical} to MPET equations 
taking the displacement, the network fluid fluxes and the network pressures 
as unknowns and targeting a range of parameter
regimes. As an alternative to these fully coupled approaches a form of
splitting schemes has been analyzed by Lee~\cite{Lee2019}. Regarding
the iterative solution and preconditioning of the fully coupled
formulations, a robust preconditioner for Biot's equations (the case
for $J=1$) was presented by Lee et al.~\cite{LeeEtAl2017}. Hong et
al.~\cite{hong2019conservative} presented both theoretical results and
numerical examples regarding parameter-robust preconditioners for the MPET 
equations with their extended three-field-type formulation.
Hong et al. further developed parameter-robust solver algorithms, an iterative solver algorithm 
using the iterative coupling approach (cf. \cite{MikelicWheeler2013}) 
in~\cite{hong2018parameter}, and an Uzawa-type algorithm in~\cite{hong2019parameter}.

In this paper, we present a new approach for preconditioning linear
systems of equations resulting from a conforming finite element 
discretization of the total pressure variational formulation of the MPET 
equations. The key idea, as introduced for the MPT equations
in~\cite{PiersantiEtAlEnumath2019}, to design a parameter-dependent
transformation of the pressure variables $p = (p_1, \dots, p_J)$ into
a set of transformed variables $\tilde p$. The transformation should
be such that the originally coupled exchange operator decouples while
the originally decoupled diffusion operator stays decoupled
(diagonal). The design of such a transform hinges on the concept of
\emph{diagonalization by congruence} and associated matrix
theory. After transformation, we then define a natural block diagonal
preconditioner for the transformed system of equations. This strategy
yields a parameter--robust preconditioner, which we both prove
theoretically and demonstrate numerically.

This manuscript is organized as follows. We introduce notation and
review relevant preconditioning and matrix theory in
Section~\ref{sec:preliminaries}. We briefly consider the reduced case
of the MPT equations in Section~\ref{sec:mpt} before turning to the
analysis of the preconditioner for the MPET equations in
Section~\ref{sec:mpet}. Finally, we present some conclusions and
outlook in Section~\ref{sec:conclusion}.

\section{Preliminaries}
\label{sec:preliminaries}
%

In Section~\ref{sec:preconditioning}
we briefly review preconditioning of parameter-dependent 
systems and state a known result regarding simultaneous 
diagonalization by congruence.  Notation for the remainder of the manuscript 
is discussed in Section~\ref{sec:notation}.

\subsection{Notation}
\label{sec:notation}

In the subsequent manuscript, we use the following notation. Let
$\Omega$ be an open, bounded domain in $\mathbb{R}^d$, $d = 2, 3$,
with Lipschitz polyhedral boundary $\partial \Omega$. We denote by $L^2(\Omega)$
the space of square integrable functions on $\Omega$ with inner
product $\inner{\cdot}{\cdot}$ and norm $\| \cdot \|$. We denote by
$H^m(\Omega)$ the standard Sobolev space with norm $\lVert \cdot
\rVert_{H^m}$ and semi-norm $| \cdot |_{H^m}$ for $m \geq 1$ and
$H^m(\Omega; \R^d)$ the corresponding $d$-vector fields. We use
$H^m_0(\Omega)$ to denote the subspace of $H^m(\Omega)$ with vanishing trace
on the boundary of $\Omega$. Let $\Gamma$ be a subset of $\partial \Omega$ such that 
$\partial \Omega \setminus \Gamma$ has a positive $(d-1)$-dimensional Lebesgue measure. 
$H_{\Gamma}^m(\Omega)$ is the subspace of $H^m(\Omega)$ such that the elements in 
$H_{\Gamma}^m(\Omega)$ have vanishing trace on $\Gamma$. $H_{\Gamma}^m(\Omega; \R^d)$ 
is the subspace of $H^m(\Omega ; \R^d)$ such that every $v_j$ in 
$(v_1, \ldots, v_d) \in H^m(\Omega; \R^d)$ is an element in $H_{\Gamma}^m(\Omega)$. 
Throughout this paper we set $\Gamma$ a fixed subset of $\partial \Omega$ satisfying the aforementioned assumption.

We introduce the parameter-dependent $L^2$-inner product and norm:
\begin{equation*}
 \| p \|_{\beta}^2 = \inner{p}{p}_{\beta} = \inner{\beta p}{p}
\end{equation*}
for $\beta \in L^{\infty}(\Omega)$, $\beta(x) > 0$, and $p \in
L^2(\Omega)$ (and similarly for vector or tensor fields).  The notation 
$\mathbb{I}$ will denote an identity $d \times d$ matrix while $I_V$ will denote the 
identity operator on a Hilbert space $V$.

To be self-contained we recall the Kronecker product of matrices. If $A$ in $\mathbb{R}^{m\times n}$ 
and $B\in \mathbb{R}^{r\times s}$ are two real-valued matrices then 
$A\otimes B$ is the $mr\times ns$ matrix defined by multiplying each entry of 
$A$ by the matrix $B$.  That is,
\begin{align} \label{eq:kronecker}
A\otimes B = \left[ \begin{array}{cccc} 
a_{11} B & a_{12} B & \dots & a_{1n} B \\  
a_{21} B & a_{22} B & \dots & a_{2n} B \\
\vdots   & \ddots  & \ddots & \vdots  \\
a_{m 1} B & a_{m 2} B & \dots & a_{mn} B
\end{array}\right].
\end{align}
We can consider its natural extension for a matrix $A$ and a linear operator $B$.
More specifically, if $W$ is a Hilbert space, $Q$ is the $n-$fold product $Q = W\times W \times \dots \times W$, $A$ is an $n \times n$ matrix, and $B$ is a linear operator on $W$, then $A \otimes B$ is the linear operator on $Q$ defined by \eqref{eq:kronecker}.

Finally, we introduce a notation for uniform proportionality, used throughout the manuscript, 
as
\[
X \lesssim Y.
\]
That is, $X \lesssim Y$ implies the existence of some real constant 
$c_0 > 0$ such that $ X \leq c_0 Y$; any relationship between $c_0$ and pertinent 
mathematical objects, such as the total number of porous media networks considered, 
will be specified.

\subsection{Preconditioning of parameter-dependent systems}
\label{sec:preconditioning}

In this paper, we consider the preconditioning of discretizations of
the systems~\eqref{eq:mpet} and~\eqref{eq:mpt} under large parameter
variations. Therefore, we begin by summarizing core aspects of the
theory of parameter--robust preconditioning as presented in
\cite{MardalWinther2011}. We will apply this theory for formulations
of the MPT equations~\eqref{eq:mpt} and MPET equations~\eqref{eq:mpet}
in the subsequent sections.

Let $X$ be a separable, real Hilbert space with inner product
$\inner{\cdot}{\cdot}_X$, norm $\| \cdot \|_X$ and dual space
$X^*$.  Let $\mathcal{A}: X \rightarrow X$ be an invertible, symmetric
isomorphism on $X$ such that $\mathcal{A} \in \mathcal{L}(X,X^*)$
where $\mathcal{L}(X,X^*)$ is the set of bounded linear operators
mapping $X$ to its dual.  Given $f \in X^*$ consider the problem of
finding $x \in X$ such that
\begin{equation}
  \label{eq:linsys}
  \mathcal{A}x = f.
\end{equation}
The preconditioned problem reads as follows
\begin{equation}
\label{eq:preclinsys}
\mathcal{BA}x = \mathcal{B}f,
\end{equation}
where $\mathcal{B} \in \mathcal{L}(X^*,X)$ is a symmetric isomorphism
defining the preconditioner. The convergence rate of a Krylov space
method for this problem can be bounded in terms of the condition
number $\kappa(\mathcal{BA})$ where
\begin{equation*}
  \kappa(\mathcal{BA}) = \lVert \mathcal{BA} \rVert_{\mathcal{L}(X,X)} \lVert(\mathcal{BA})^{-1}\rVert_{\mathcal{L}(X,X)}.
\end{equation*}
Here, the operator norm $\lVert \mathcal{A} \rVert_{\mathcal{L}(X
  ,X^*)}$ is defined by
\begin{equation}
\label{eq:norm:operator}
\lVert \mathcal{A} \rVert_{\mathcal{L}(X, X^*)} = \sup_{x \in X} \frac{\lVert \mathcal{A} x \rVert_{X^*}}{\lVert x \rVert_{X}} .
\end{equation}

Now, for a parameter $\varepsilon$ (or more generally a set of
parameters $\varepsilon$) consider the parameter-dependent operator
$\mathcal{A}_\varepsilon$ and its preconditioner
$\mathcal{B}_\varepsilon$. Assume that we can choose appropriate
spaces $X_\varepsilon$ and $X^*_\varepsilon$ such that the norms
\begin{equation*}
  \|\mathcal{A}_\varepsilon \|_{\mathcal{L}(X_\varepsilon,X^*_\varepsilon)}
  \text{ and }
  \|\mathcal{A}^{-1}_\varepsilon \|_{\mathcal{L}(X^*_\varepsilon,X_\varepsilon)}
\end{equation*}
are bounded independently of $\varepsilon$. Similarly, we assume that
we can find a preconditioner $\mathcal{B}_\varepsilon$ such that the
norms $\lVert \mathcal{B}_\varepsilon \rVert_{
  \mathcal{L}(X_\varepsilon,X^*_\varepsilon)}$ and $\lVert
\mathcal{B}^{-1}_\varepsilon \rVert_{\mathcal{L}(X^*_\varepsilon,
  X_\varepsilon)}$ are bounded independently of $\varepsilon$. Given
these assumptions, the condition number $\kappa(\mathcal{B_\varepsilon
  A_\varepsilon})$ will be bounded independently of $\varepsilon$. We
will refer to such a preconditioner as robust in (or with respect to)
$\varepsilon$. %
We conclude this section with a change of variables result, recalled from 
basic matrix analysis \cite{horn1990matrix}, that will prove effective in the 
sections that follow.  

\begin{lemma}\label{lemma:congruence-change-of-variables}
Let $W$ be a real Hilbert space and $Q = W\times W\times \dots \times W$ be the 
$n$-fold direct product of $W$ for a fixed $n\in \mathbb{N}$.  %
Let $A: W \rightarrow W^{\ast}$, and $B : W \rightarrow W^{\ast}$ be linear 
operators. Suppose that $K,E \in \mathbb{R}^{n\times n}$ are symmetric matrices, and $K$ is invertible. 
Define the operators $S:Q\rightarrow Q^{\ast}$ and $T: Q\rightarrow Q^{\ast}$ 
by %
\begin{equation*}
S = K\otimes A,\quad \text{and}\quad T = E\otimes B,
\end{equation*}
where $\otimes$ is the Kronecker product. Consider the variational problem: given $f\in Q^*$ find $p=(p_1,p_2,\dots,p_n)^T \in Q$ such that  
\begin{equation}\label{eqn:lemma:congruence-change-of-variables:varform-a}
\inner{S p}{q} + \inner{T p}{q} = \inner{f}{q},\quad\forall\, q\in Q
\end{equation}
where $\inner{\cdot}{\cdot}$ is the duality pairing of $Q^*$ and $Q$. 
Then there exists an invertible matrix $P\in \mathbb{R}^{n\times n}$ such that 
the above variational problem is equivalent to: find $\tilde{p} \in Q$ such that
\begin{equation}\label{eqn:lemma:congruence-change-of-variables:varform-b}
\inner{D_S \tilde{p}}{q} + \inner{D_T \tilde{p}}{q} = \inner{F}{q},\quad\forall\,q\in Q,
\end{equation}
where $F = (P^T \otimes I_{W^*}) f$ for $I_{W^*}$ the identity operator on $W^*$, and 
$D_S = \left(P^T K P\right)\otimes A$ and $D_T = \left(P^T E P\right)\otimes B$ are block diagonal linear operators from $Q$ to $Q^*$.
\end{lemma}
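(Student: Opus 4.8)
The plan is to reduce the statement to the classical fact that two real symmetric matrices, one of which is invertible, can be simultaneously diagonalized by congruence, and then lift that matrix-level statement to the operator level using properties of the Kronecker product. First I would invoke the simultaneous-diagonalization-by-congruence theorem from \cite{horn1990matrix}: since $K$ is symmetric and invertible, there exists an invertible $P \in \mathbb{R}^{n\times n}$ such that both $P^T K P$ and $P^T E P$ are diagonal (this requires a mild genericity-type hypothesis, e.g.\ that the pencil $K + tE$ is not defective; I will return to this as the main subtlety). Write $D_K = P^T K P$ and $D_E = P^T E P$, both diagonal, hence in particular block diagonal.

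Next I would perform the change of variables $p = (P \otimes I_W)\tilde p$ in \eqref{eqn:lemma:congruence-change-of-variables:varform-a}. The key algebraic identity is the mixed-product property of the Kronecker product, $(M\otimes A)(N\otimes B) = (MN)\otimes(AB)$, which extends verbatim to the case where $A, B$ are linear operators on $W$ (one checks it block-row by block-row directly from \eqref{eq:kronecker}). Using this, $S p = (K\otimes A)(P\otimes I_W)\tilde p = (KP)\otimes A\,\tilde p$, and similarly $T p = (EP)\otimes B\,\tilde p$. Then I would test \eqref{eqn:lemma:congruence-change-of-variables:varform-a} against $q = (P\otimes I_W)r$ for arbitrary $r\in Q$; since $P$ is invertible, $q$ ranges over all of $Q$ as $r$ does, so this is an equivalent family of test equations. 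The left-hand side becomes $\inner{(KP\otimes A)\tilde p}{(P\otimes I_W)r} + \inner{(EP\otimes B)\tilde p}{(P\otimes I_W)r}$, and using that the transpose (adjoint) of $P\otimes I_W$ acting on the dual side is $P^T\otimes I_{W^*}$ — which again follows from the block structure \eqref{eq:kronecker} together with the definition of the duality pairing on the product space — this equals $\inner{(P^T K P\otimes A)\tilde p}{r} + \inner{(P^T E P\otimes B)\tilde p}{r} = \inner{D_S\tilde p}{r} + \inner{D_T\tilde p}{r}$ with $D_S = D_K\otimes A$ and $D_T = D_E\otimes B$. The right-hand side becomes $\inner{f}{(P\otimes I_W)r} = \inner{(P^T\otimes I_{W^*})f}{r} = \inner{F}{r}$, which identifies $F$ as claimed. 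Since $r\in Q$ was arbitrary, \eqref{eqn:lemma:congruence-change-of-variables:varform-b} holds, and the two problems are equivalent because the map $\tilde p \mapsto (P\otimes I_W)\tilde p$ is a bijection of $Q$ with $(P^{-1}\otimes I_W)$ as inverse.

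The main obstacle is the first step: the bare hypotheses ``$K$ symmetric and invertible, $E$ symmetric'' do not in general guarantee simultaneous diagonalizability by congruence — one needs an additional condition (for instance that $K$ is positive definite, or more generally that the matrix pencil has a diagonalizable $K^{-1}E$ with appropriate reality of eigenvalues). In the MPET application $K$ will be a positive-definite storage/mass-type matrix, so the clean classical theorem applies; I expect the proof to either silently assume definiteness of $K$ (consistent with how the lemma is used later) or to cite the precise hypothesis from \cite{horn1990matrix}. Everything after that is bookkeeping with the Kronecker mixed-product rule and the adjoint identity $(P\otimes I_W)^* = P^T\otimes I_{W^*}$; the only care needed is to verify these Kronecker identities in the operator setting rather than the finite-matrix setting, but since $I_{W^*}$ and the scalar multiplications commute with $A$ and $B$, the verification is entrywise-block and routine.
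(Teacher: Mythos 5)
Your proof is correct and follows essentially the same route as the paper's: invoke the simultaneous diagonalization by congruence from \cite{horn1990matrix}, then perform the change of variables $p = (P\otimes I_W)\tilde p$ and carry out the Kronecker mixed-product and adjoint ($(P\otimes I_W)^{*} = P^{T}\otimes I_{W^{*}}$) bookkeeping. The subtlety you flag about the hypotheses is genuine, and the paper resolves it exactly as you anticipated: its appendix proves that $K^{-1}E$ is diagonalizable via the similarity $K^{1/2}(K^{-1}E)K^{-1/2} = K^{-1/2}EK^{-1/2}$, which tacitly assumes $K$ is positive definite (so that a real $K^{1/2}$ exists) even though the lemma as stated only requires symmetry and invertibility.
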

\begin{proof}
Apply \cite[Theorem 4.5.17a-b p. 287]{horn1990matrix} the 
hypotheses on the matrices $K$ and $E$ and properties of the tensor product; %
see Appendix~\ref{appdx:lemma:congruence-change-of-variables} for detail.
\end{proof}

\section{Preconditioning the MPT equations via diagonalization}
\label{sec:mpt}

We begin by summarizing our novel approach to variational formulations
and associated preconditioning of the MPT equations. This approach was
introduced in \cite{PiersantiEtAlEnumath2019}. The core idea is to
reformulate the MPT (and MPET) equations using a change of pressure
variables $p$. In particular, we aim to find a transformation
of the variables $p \mapsto \tilde{p}$ such that the
transformed system of pressure equations decouple.  Here, we will consider 
a Hilbert space $W$ and the $J$-fold product $Q = W\times W \times\dots\times W$. 
Each pressure $p_j$, for $j=1,2,\dots,J$ satisfies $p_j \in W$ and we will  
write $p =(p_1, p_2, \dots, p_J)\in Q$.  In the sections that follow, we briefly
illustrate the core idea, formulation of the MPT equations and
resulting preconditioner, and refer to \cite{PiersantiEtAlEnumath2019}
for more details. This approach is then extended to the MPET equations
in Section~\ref{sec:mpet}.

\subsection{The MPT equations in operator form}

We consider the MPT equations as defined by~\eqref{eq:mpt}. We further
impose homogeneous Dirichlet boundary conditions for all pressures:
$p_j = 0$ on $\partial \Omega$ for $1 \leq j \leq J$.  
Define $\xi_{j} = \ssum_{i = 1}^J \xi_{j \leftarrow i}$ for each $1 \leq j
\leq J$. Let us define two $J \times J$ matrices:
\begin{equation}
  \label{eq:mpt:KD}
  K =
  \begin{pmatrix} K_1 & 0 & \cdots & 0 \\
    0 & K_2  & \cdots & 0 \\
    \vdots & \vdots & \ddots & \vdots \\ 
    0 & 0 & \cdots & K_J \\
  \end{pmatrix}, \quad
  E =
  \begin{pmatrix} \xi_1 & - \xi_{1 \leftarrow 2} & \cdots & - \xi_{1 \leftarrow J}  \\
    - \xi_{1 \leftarrow 2} & \xi_2 & \cdots & - \xi_{2 \leftarrow J} \\
    \vdots & \vdots & \ddots & \vdots \\ 
    - \xi_{1 \leftarrow J} & - \xi_{2 \leftarrow J} & \cdots & \xi_J \\
  \end{pmatrix} .
\end{equation}

The system~\eqref{eq:mpt} can be expressed in operator form as: given $g\in Q$ 
find $p\in Q$ satisfying  
\begin{equation}
  \label{eq:mpt:operator}
  \mathcal{A}_{\rm MPT} p = g \quad \text{ where } \quad  \mathcal{A}_{\rm MPT} = - K \otimes \Delta + E \otimes I_W. 
\end{equation}
In the above, $-K \otimes \Delta:Q \rightarrow Q^*$ is the block diagonal operator such that its $j$-th block is given by the bilinear form
$\inner{K_j \nabla p_j}{\nabla q_j}$ for $p_j,q_j \in Q_j=W$, and $E \otimes I_W:Q \rightarrow Q^*$ is the block operator such that its $(i,j)$-block $E_{ij}$ is defined by the bilinear forms
\begin{equation*}
  - \inner{\xi_{i\leftarrow j} p_i}{q_j} \text{  if } i \not = j, \qquad \inner{\xi_j p_j}{p_j} \text{  if } i = j .
\end{equation*}

We note that $K$ is real, positive definite and diagonal (and thus
invertible), and that $E$ is real, symmetric and (weakly row)
diagonally dominant by definition. In particular, $E$ is symmetric 
positive semi-definite because of the identity 
\begin{equation}
  \label{eq:E-pos-semidef}
 w E w^T = \sum_{1\le i,j \le J} \xi_{i \leftarrow j} (w_i - w_j)^2,
\end{equation}
for $w = (w_1, w_2, \ldots, w_J)$ with the convention $\xi_{i \leftarrow i} = 0$. %
A naive block diagonal preconditioner $\mathcal{B}_{\rm MPT}$ can be
constructed by taking the inverse of the diagonal blocks of
$\mathcal{A}_{\rm MPT}$. However, as we demonstrated in
\cite{PiersantiEtAlEnumath2019}, the resulting preconditioner is not
robust with respect to variations in the conductivity and exchange
parameters. In fact, the condition numbers increased linearly with
the ratio between the exchange and conductivity coefficients. 

\subsection{Change of variables using diagonalization by congruence}
\label{sec:mpt:transform}

In this section we discuss a new formulation for the MPT equations,
which in turn easily offers a parameter-robust preconditioner.  %
Let $P \in \R^{J \times J}$ be an invertible linear transformation
defining a change of variables and let $\tilde{p}$ and
$\tilde{q}$ be the new set of variables such that
\begin{equation}
\label{eq:pqtilde}
p = \left(P\otimes I_W\right) \tilde{p}, \quad q = \left(P\otimes I_W\right) \tilde{q},
\end{equation}
with $q = (q_1, q_2, \dots, q_J)$ and similarly for
$\tilde{q}, \tilde{p}$. %
Since $K$ and $E$ are symmetric, we apply Lemma~\ref{lemma:congruence-change-of-variables}, 
with $A = \Delta$ and $B=I_W$, to obtain a matrix, $P$, simultaneously diagonalizing 
$K$ and $E$ by congruence; that is, the equivalent operators 
$\left(P^T K P\right)\otimes \Delta$ and $(P^T E P)\otimes I_W$ are block diagonal. 
The resulting formulation (c.f.~\eqref{eqn:lemma:congruence-change-of-variables:varform-b}) of the 
MPT equations reads as follows: find the transformed pressures   

$\tilde{p} = (\tilde{p}_1, \dots, \tilde{p}_J)$ such that, for a given $g\in Q$, we have 
the equality
\begin{equation}
  \label{eq:mpt:transformed}
  \mathcal{\tilde{A}_{\rm MPT}} \tilde{p} 
  = (- \tilde{K} \otimes \Delta + \tilde{E} \otimes I_W ) \tilde{p}
  = \left(P^T\otimes I_W\right) g,
\end{equation}
where $\tilde{K} = P^T K P$ and $\tilde{E} = P^T E P$ are diagonal with
\begin{equation}
  \tilde{K} = \diag (\tilde{K}_1, \dots, \tilde{K}_J ), \quad
  \tilde{E} = \diag (\tilde{\xi}_1, \dots, \tilde{\xi}_J ).
\end{equation}

\subsection{Preconditioning the transformed MPT system}

As in \cite{PiersantiEtAlEnumath2019}, we can immediately identify the
parameter-dependent norm
\begin{equation*}
  \lVert \tilde{p} \rVert^2_{\tilde{\mathcal{B}}_{\rm MPT}}
  = \sum_{j=1}^J \inner{\tilde{K}_j \Grad \tilde{p}_j}{\Grad \tilde{p}_j} + \inner{\tilde{\xi}_j \tilde{p}_j}{\tilde{p}_j} 
\end{equation*}
and the following preconditioner associated to the above norm for~\eqref{eq:mpt:transformed}:
\begin{equation}
\label{eq:mpt:transformet:precond}
\mathcal{\tilde{B}}_{\rm MPT}
=
\begin{pmatrix} (-\tilde{K}_1 \Delta + \tilde{\xi}_1 I )^{-1} & 0 & \cdots & 0  \\
0 & (-\tilde{K}_2 \Delta + \tilde{\xi}_2 I)^{-1} & \cdots & 0\\
\vdots & \vdots & \ddots & \vdots \\ 
0 & 0 & \cdots & (-\tilde{K}_J \Delta + \tilde{\xi}_J I)^{-1} \\
\end{pmatrix} .
\end{equation}
Clearly, $\mathcal{\tilde{A}_{\rm MPT}}$ and
$\mathcal{\tilde{B}}^{-1}_{\rm MPT}$ are trivially spectrally
equivalent. We refer to \cite{PiersantiEtAlEnumath2019} for numerical
experiments comparing the standard and transformed formulation and
preconditioners.

\subsection{Finding the transformation matrix}
There are two cases that we will consider; the first case is when the matrix 
$C=K^{-1}E$ has $J$ distinct eigenvalues, while the second case will be for 
the case where at least one of the eigenvalues is repeated.  %
In the case of distinct eigenvalues, the number of distinct eigenvalues of $C = K^{-1} E$ will depend on
the material parameter values $K_j$ and $\xi_{j \rightarrow i}$ for $1
\leq i, j \leq J$. In the common case where $C$ has $J$ distinct
eigenvalues, the transformation matrix is easily defined as
follows. Let $\lambda_1, \dots, \lambda_J$ be the real eigenvalues of
$C$, and let $v_1, \dots, v_J$ be the corresponding normalized 
eigenvectors. Then,
\begin{equation}
  P = [v_1, \dots, v_J],
\end{equation}
will diagonalize $K$ and $E$ by congruence. In
\cite{PiersantiEtAlEnumath2019}, we presented numerical examples for
the case of $J$ distinct eigenvalues (with $J = 2$).

The congruence matrix for the case of repeated eigenvalues is also easily
constructed. For these cases, the transform $P$ can be constructed by
repeated application of block-wise eigenvector matrices, see
\cite{horn1990matrix} for the general procedure. In Example
\ref{ex:mpt:3net:multipleeig:diagonalization} below, we present an
example on how to obtain the transformation matrix $P$ in the case
where one of the eigenvalues has algebraic multiplicity $2$ with $J =
3$.

\begin{example}
\label{ex:mpt:3net:multipleeig:diagonalization}
In this example we show how to obtain the transformation matrix $P$
for a three--network case when one of the eigenvalues of $K^{-1} E$
has algebraic multiplicity $2$. In this example, due to the presence of 
the repeated eigenvector, the construction of $P$ does not follow directly 
from the use of normalized eigenvectors and, thus, $P$ is not normalized 
a priori.  We will, however, normalize $P$ following construction to maintain 
consistency with the previous case; in practice, either the normalized or 
non-normalized version of $P$ may be used.
\begin{equation}
  K =
  \begin{pmatrix}
    1.0 & 0 & 0 \\
    0 & 0.0001 & 0 \\
    0 & 0 & 0.01 \\
\end{pmatrix}, \quad
  E =
  \begin{pmatrix}
    1.01  & -0.01 & - 1.0  \\
    -0.01  & 0.0101 & -0.0001 \\
    -1.0  & -0.0001 & 1.0001
  \end{pmatrix} .
\end{equation} 
By definition
\begin{equation}
  C = K^{-1} E
  = \begin{pmatrix}
    1.01 & -0.01 & -1.0 \\
    -100 &  101 & -1.0 \\
    -100 & -0.01 &  100.01
  \end{pmatrix} .
\end{equation}
The eigenvalues $\lambda_1, \lambda_2, \lambda_3$ and eigenvectors
$[v_1, v_2, v_3] = P_1$ of $C$ are then:
\begin{equation}
  \begin{split}
    \lambda_1 &= 0, \quad \lambda_2 = \lambda_3 = 101.01; \\ 
    P_1 &= \begin{pmatrix} -0.5773 & -0.0071 & -0.0091\\ 
      -0.5773 &  0.7070 & -0.4031 \\
      -0.5773 &  0.7070 & 0.9150
    \end{pmatrix} .
  \end{split}
\end{equation}
In this specific case the eigenvalues $\lambda_2, \lambda_3$ have
algebraic multiplicity $2$ and geometrical multiplicity $1$. If we try
to diagonalize $K$ and $E$ by congruence via $P_1$, we obtain
\begin{equation}
\begin{split}
P_1^T K P_1 & = \begin{pmatrix} 3.3670 \times 10^{-1} & 0 & 0 \\
        0 & 5.1007 \times 10^{-3} & 6.5069 \times 10^{-3} \\
        0 & 6.5069 \times 10^{-3} & 8.4729 \times 10^{-3}
\end{pmatrix},\\ 
P_1^T E P_1 & = 101.01 \begin{pmatrix} 0 & 0 & 0 \\
        0 & 5.1007 \times 10^{-3} & 6.5069 \times 10^{-3} \\
        0 & 6.5069 \times 10^{-3} & 8.4729 \times 10^{-3}
\end{pmatrix}.
\end{split}
\end{equation}
In this case, the resulting matrices are block diagonal. The lower
right blocks are multiples of each other. We can diagonalize the lower
right blocks by computing the eigen-decomposition of either of
these. The lower right block of $P_1^T K P_1$ is
\begin{equation}
  \begin{pmatrix} 5.1007 \times 10^{-3} & 6.5069 \times 10^{-3} \\
    6.5069 \times 10^{-3} & 8.4729 \times 10^{-3}
  \end{pmatrix} 
\end{equation}
and its eigenpairs are
\begin{equation}
\begin{split}
\lambda_1 &= 6.4967 \times 10^{-5}, \lambda_2 =  1.3508 \times 10^{-2};\\ 
P_2 &= \begin{pmatrix} -0.79083 & -0.6120 \\
                        0.6120  & -0.7908 
\end{pmatrix}.
\end{split}
\end{equation}
The final transformation matrix $P$ that diagonalizes $K$ and $E$ by congruence is then:
\begin{equation}
  P = P_1 
  \begin{pmatrix}
    1 &
    \begin{matrix}
      0 & 0
    \end{matrix} \\
    \begin{matrix}
      0 \\ 0
    \end{matrix} & P_2
  \end{pmatrix} = 
  \begin{pmatrix}
    -5.7735 \times 10^{-1} &  7.1935 \times 10^{-5} &  1.1575 \times 10^{-2}\\
    -5.7735 \times 10^{-1} & -8.0594 \times 10^{-1} & -1.1391 \times 10^{-1}\\
    -5.7735 \times 10^{-1} &  8.6590\times 10^{-4} & -1.1564
\end{pmatrix}.
\end{equation}
Note that despite the columns of $P_1$ and $P_2$ are normalized with norm $1,$ the resulting matrix $P'$s columns are not normalized.
After the normalization, the matrix $P$ looks as follows:
\begin{equation}
  P = 
  \begin{pmatrix}
    -5.7735 \times 10^{-1} &  8.9255 \times 10^{-5} &  9.9611 \times 10^{-2}\\
    -5.7735 \times 10^{-1} & -9.9999 \times 10^{-1} & -9.8026 \times 10^{-2}\\
    -5.7735 \times 10^{-1} &  1.0743\times 10^{-4} & -9.9513 \times 10^{-1}
\end{pmatrix}.
\end{equation}
 and the diagonalized matrices are as follows
\begin{equation}
\begin{split}
  \tilde{K} = P^T K P &=
  \begin{pmatrix}
    3.3670 \times 10^{-1} & 0 & 0 \\
    0 &  1.0001 \times 10^{-4} & 0 \\
    0 &  0 &  1.0003\times 10^{-2}
  \end{pmatrix} , \\
  \tilde{E} = P^T E P &=
  \begin{pmatrix} 
    0 & 0 & 0 \\ 
    0 &  1.0102 \times 10^{-2} & 0 \\
    0 &  0 &  1.0104
  \end{pmatrix} .
\end{split}        
\end{equation}
\end{example}

\section{Preconditioning the MPET equations via diagonalization}
\label{sec:mpet}

In this section, we present a change of variables for the total
pressure formulation of the time-discrete MPET equations and propose
and analyze a preconditioning strategy for the resulting variational
formulation. The change of MPET variables is guided by the change of
MPT variables presented in the previous section.  As in Sec.~\ref{sec:mpt}, 
$Q$ will be defined as the $J$-fold product of a Hilbert space $W$.  

\subsection{Total pressure formulation of the MPET equations and its time-discrete form}

The total pressure formulation of Biot's equations~\cite{LeeEtAl2017}
and more generally the MPET equations~\cite{lee2019mixed} is a robust
mixed variational formulation targeting the nearly incompressible case
and incompressible limit ($\lambda \gg 1$). The total pressure, which we will 
see satisfies $p_0\in L^2(\Omega)$, is defined by
\begin{equation}
  \label{eq:def:p_0}
  p_0 = \lambda \Div u - \alpha \cdot p , 
\end{equation}
where 
\footnote{Note that we start counting at $1$ in the definition of $p$ here and 
throughout, in contrast to e.g.~in \cite{PiersantiEtAlEnumath2019}.} 
$\alpha = (\alpha_1, \dots,
\alpha_J)\in\mathbb{R}^J$, $p = (p_1, \dots, p_J)\in Q$ and $\alpha \cdot p =
\ssum_{i=1}^J \alpha_i p_i \in W$. The total pressure formulation
of~\eqref{eq:mpet} then reads as follows: for $t \in (0, T]$, find the
  displacement vector field $u$ and the pressure scalar fields $p_0$ and $p_j$
  for $j = 1, \dots, J$ such that
  \begin{subequations}
    \label{eq:mpet:tp}
    \begin{gather}
      \label{eq:mpet:tp1}
      - \Div \left ( 2 \mu \strain (u) + p_0 \Bbb{I} \right ) = f, \\
      \label{eq:mpet:tp2}
      \Div u - \lambda^{-1} p_0 - \lambda^{-1} \alpha \cdot p = 0, \\ 
      \lambda^{-1} \dot{p}_0 + s_j \dot{p}_j - \Div (K_j \nabla p_j) + \alpha_j \lambda^{-1} \alpha \cdot \dot{p}
      + \ssum_{i=1}^J \xi_{j \leftarrow i} (p_j - p_i) = g_j, 
    \end{gather}
\end{subequations}
for $j = 1, \dots, J$.
 
We consider an implicit Euler discretization in time of the total
pressure formulation of the time-dependent MPET
equations~\eqref{eq:mpet:tp} and examine the resulting stationary
problem at each time step. The resulting time-discrete version
of~\eqref{eq:mpet:tp} with time step $\tau > 0$ reads as follows: find
the displacement $u$ and the pressures $p_j$ for $0 \leq j \leq J$
such that
\begin{subequations}
  \label{eq:mpet:tp:semidiscretized}
  \begin{gather}
    - \Div \left ( 2 \mu \strain (u) + p_0 \Bbb{I} \right ) = f, \\ 
    \Div u - \lambda^{-1} p_0 - \lambda^{-1} \alpha \cdot p = 0, \\ 
    - s_j p_j - \alpha_j \lambda^{-1} p_0 - \alpha_j \lambda^{-1} \alpha \cdot p + \tau \Div (K_j \nabla p_j)
    - \tau \ssum_{i=1}^J \xi_{j \leftarrow i} (p_j - p_i) = g_j, 
  \end{gather}
\end{subequations} 
for $1 \leq j \leq J$ where the new right hand sides $g_j$ for $j=1,
\dots, J$ have been negated and contain also terms from the previous
time-step. Again, we impose homogeneous Dirichlet boundary conditions
for all network pressures: $p_j = 0$ on $\partial \Omega$ for $1 \leq
j \leq J$.

Let $V = H^1_{\Gamma}(\Omega; \R^d)$, $Q_0 = L^2(\Omega)$ and $Q_j = W =
H^1_0(\Omega)$ for $1 \leq j \leq J$ and $\Omega \subset \R^d$. Let $Q =
Q_1 \times \dots \times Q_J$. As in Section~\ref{sec:mpt}, we write $p
= (p_1, \dots, p_J)$, $q = (q_1, \ldots, q_J)$, and $g = (g_1, \ldots,
g_J)$.  Multiplying by test functions, and integrating second-order
derivatives by parts, we obtain the following variational formulation
of~\eqref{eq:mpet:tp:semidiscretized}: find $u \in V$ and $p_i \in
Q_i$ for $i = 0, \dots, J$ such that
\begin{subequations}
  \label{eq:mpet:vf}
  \begin{alignat}{2}
    a(u, v) + b(v, p_0) &= \inner{f}{v} &&\quad \foralls v \in V, \\
    b(u, q_0) - c_1(p_0, q_0) - c_2(q_0, p) &= 0 &&\quad \foralls q_0 \in Q_0, \\
    - c_2(p_0, q) - c_3(p, q) &= \inner{g}{q} &&\quad \foralls q \in Q.
  \end{alignat}
\end{subequations} 
The bilinear forms $a : V \times V \rightarrow \R$ and $b : V \times
Q_0 \rightarrow \R$ are defined as:
\begin{align}
  \label{eq:ab}
  a(u, v) & = \inner{2 \mu \strain(u)}{\strain(v)}, &  b(v, q_0) &=  \inner{\Div v}{q_0}, 
\end{align}
while $c_1 : Q_0 \times Q_0 \rightarrow \R$, $c_2 : Q_0 \times Q
\rightarrow \R$, and $c_3: Q \times Q \rightarrow \R$ are defined as:
\begin{align}
  \label{c1} c_1(p_0, q_0) &= \inner{\lambda^{-1} p_0}{q_0}, \\
  \label{c2} c_2(p_0, q) &= \inner{\lambda^{-1} \alpha \cdot q}{p_0} , \\
  \label{c3} c_3(p, q) &= \tau \sum_{j=1}^J \inner{K_j \Grad p_j}{\Grad q_j} + \sum_{j=1}^J \inner{s_j p_j}{q_j}, \\
 \notag &\quad + \tau \sum_{j=1}^J \sum_{i=1}^J \inner{\xi_{j \leftarrow i} (p_j - p_i)}{q_j} + \inner{\lambda^{-1} \alpha \cdot p}{\alpha \cdot q}.
\end{align}
For future reference we define $c : (Q_0 \times Q) \times (Q_0 \times Q) \rightarrow \R$ via
\begin{align} \label{c-form}
  c((p_0, p), (q_0, q)) &= c_1(p_0, q_0) + c_2(p_0, q) + c_2(q_0, p) + c_3(p, q) . 
\end{align}

\subsection{MPET as a parameter-dependent saddle point system}
The purpose of this subsection is to explain difficulties 
to construct parameter-robust block preconditioners 
for the system \eqref{eq:mpet:vf}. The difficulties lead 
us to develop the change of variables idea in the MPT problem in consideration of its extension to MPET problems.  

Note that the system~\eqref{eq:mpet:tp:semidiscretized} or
equivalently~\eqref{eq:mpet:vf} can be viewed as a
saddle point problem with a stabilization term (given by the bilinear form $c$). 
Thus, the equations fit well into Brezzi saddle point
theory~\cite{Brezzi1974}. However, various material parameters in different ranges are involved in the system, 
so constructing parameter-robust preconditioners for this system is not a straightforward application of the Brezzi theory. 
Let us recall the parameter ranges we are concerned with in this paper. 
The existing literature cover the parameters
\begin{equation*}
  0 \le s_j \lesssim 1, \qquad 0 < K_j \ll 1, \qquad  1 \lesssim \mu \lesssim \lambda < + \infty , \qquad (1 \le j \le J).
\end{equation*}
In addition to these we are also interested in developing preconditioners which are robust for the ratios of the exchange coefficients $\xi_{i \rightarrow j}$'s.  

We first consider construction of preconditioners utilizing the saddle point problem structure. 
To reveal the saddle point problem structure of~\eqref{eq:mpet:vf} let us look at the operator form of~\eqref{eq:mpet:vf}, which is 
\begin{equation}
  \label{eq:mpet:operator:form}
  \mathcal{A}_{\rm MPET}
  \begin{pmatrix}
    u \\
    p_0 \\
    p
  \end{pmatrix}
  = \begin{pmatrix} 
    - 2 \Div (\mu \varepsilon) & - \Grad & \textbf{0}\\ 
    \Div & - C_1 & - {C}_2^* \\
    \textbf{0} & - {C}_2 & - {C}_3 \\
  \end{pmatrix}
  \begin{pmatrix} u \\ p_0 \\ p
  \end{pmatrix}
  = \begin{pmatrix}
    f \\
    0 \\
    g \\
  \end{pmatrix}
\end{equation}
where $C_1: Q_0 \rightarrow Q_0^*$, $C_2: Q_0 \rightarrow Q^*$, ${C}_3: Q \rightarrow Q^*$ are the operators associated to the bilinear forms $c_1$, $c_2$, $c_3$ in \eqref{c1}, \eqref{c2}, \eqref{c3}. Here $C_2^*$ is the adjoint operator of $C_2$. 
We can rewrite $\mathcal{A}_{\rm MPET}$
of~\eqref{eq:mpet:operator:form} in the standard saddle point form
\begin{equation*}
  \mathcal{A}_{\rm MPET}
  = \begin{pmatrix} 
    A & B_0^* \\ 
    B_0 & -C
  \end{pmatrix}
\end{equation*}
by considering the product space grouping $V \times (Q_0 \times Q)$
and identifying
\begin{equation}
  \label{eq:AB}
  A = -2 \Div (\mu \varepsilon), \qquad B_0 = (\Div, \textbf{0})^T, \qquad  C
  =
  \begin{pmatrix}
    C_1 & C_2^* \\
    C_2 & C_3 \\
  \end{pmatrix} .
\end{equation}
One of natural approaches to construct block preconditioners for this system is 
to use the block diagonal operator 
\begin{equation*}
\begin{pmatrix}
  A^{-1} & 0 \\ 0 & (C + B_0 A^{-1} B_0^*)^{-1}
\end{pmatrix} 
\end{equation*}
or its approximation. However, the operator $(C + B_0 A^{-1} B_0^*)^{-1}$ is not easy to implement efficiently in practice.
Moreover, the analysis for spectral equivalence of this type of 
preconditioners is related to a non-trivial generalized eigenvalue problem. 
More precisely, the spectral equivalence is equivalent to uniform upper and lower bounds 
of the generalized eigenvalues, so it requires a deep analysis 
of the non-trivial generalized eigenvalue problem utilizing block matrix structures. 
In this paper we consider a general MPET model with general $J$ and general (constant) exchange coefficients, 
so the number of blocks in block matrices is not restricted. 
This makes an analysis of the generalized eigenvalue problem even more challenging, so we will not pursue this approach further in this paper.

Another natural choice of block preconditioners for this system is a direct extension of the
preconditioner in \cite{LeeEtAl2017}. In other words, we use the block diagonal operator of the form 
\begin{equation}
  \mathcal{B}_{\rm MPET} =
  \begin{pmatrix}
    (- \mu \Delta)^{-1} & 0 & 0 \\
    0 & I^{-1} & 0 \\
    0 & 0 & D^{-1} \\
  \end{pmatrix},  \;
  \label{eq:mpet:precond:naive}
\end{equation}
where $I: Q_0 \rightarrow Q_0^*$ is the operator defined by the bilinear form 
$\inner{p_0}{q_0}$ for $p_0, q_0 \in Q_0$, $D:Q \rightarrow Q^*$ is the block 
diagonal operator such that its $j$-th diagonal block ($1 \le j \le J$) is defined 
by selecting the $j^{\text{th}}$ diagonal entry of the operator $C_3$ associated to 
the bilinear form \eqref{c3}; that is  
\begin{equation*}
\tau \inner{K_j \Grad p_j}{\Grad q_j} + \inner{s_j p_j}{q_j} + \tau \inner{\xi_j p_j }{q_j} + \inner{\lambda^{-1} \alpha_j p_j}{\alpha_j q_j} , \qquad p, q \in Q.
\end{equation*}
However, this preconditioner is not robust with respect to the
material parameters, particularly for the hydraulic conductivity and the
exchange coefficients. We illustrate numerical experiment results in
Example~\ref{ex:mpet:tp}.

\begin{example}
  \label{ex:mpet:tp}
  Let $\Omega = [0, 1]^2 \subset \R^2$, and consider a structured
  triangulation $\mathcal{T}_h$ of $\Omega$ constructed by dividing
  $\Omega$ into $N \times N$ squares and then subdividing each square
  by a fixed diagonal. Let $J = 2$. Consider a finite element
  discretization of~\eqref{eq:mpet:vf} using the lowest order
  Taylor--Hood-type elements i.e.~continuous piecewise quadratics for
  each displacement component, and continuous piecewise linear for
  all pressures \cite{lee2019mixed}. Let $\tau = 1.0$, $\mu = 1.0$,
  $s_j = 1.0$, $\alpha_j = 0.5$ and $K_1 = 1.0$, and
  consider ranges of values for $\lambda, \xi_{1 \leftarrow 2}$ and
  $K_2$. We consider the case for $s_1=s_2 = 1.0$ and $s_1 = s_2 = 0.0.$
  Starting from an initial random guess, we consider a MinRes
  solver of the resulting linear system of equations with an algebraic
  multigrid (Hypre AMG) preconditioner of the
  form~\eqref{eq:mpet:precond:naive}.  
  The convergence criterion used was 
\[ 
 (\mathcal{B} r_k , r_k)/(\mathcal{B} r_0 , r_0) \leq 10^{-6} 
\]
 where $r_k$ is the residual of the $k$-th iteration. 
  The resulting number of
  Krylov iterations are shown in Table~\ref{tab:mpet:tp:naive} for
  $\xi_{1 \rightarrow 2} = 10^{6}$ and ranges of $K_2$ and
  $\lambda$. 
  We observe that the number of iterations is moderate ($\approx 30$)
  for $K_2$ of comparable magnitude ($10^6$) to $\xi_{1 \leftarrow
    2}$. The number of iterations increase with decreasing $K_2$: up
  to $\approx 1000$ for $K_2 = 1$. For large $K_2$, the number of
  iterations seems independent of the mesh resolution $N$. In
  contrast, for smaller $K_2$ (relative to $\xi_{1 \rightarrow 2}$), the
  number of iterations also increase with the mesh resolution. We note
  that the iteration counts do not vary substantially with $\lambda$. 
  \begin{table}
    \begin{center}
    \begin{tabular}{cc|ccccc}
      \toprule
      $K_2$     & $\lambda$      & $N=16$   & $32$   & $64$   & $128$  \\
      \midrule
       \multirow{4}{*}{$10^{0}$}     & $10^0$   & $738$  & $1271$ & $1756$ & $1938$ \\
                                      & $10^2$  & $1024$ & $1505$ & $1679$ & $1631$ \\
                                      & $10^4$  & $1028$ & $1506$ & $1666$ & $1628$ \\
                                      & $10^6$  & $1004$ & $1499$ & $1677$ & $1633$ \\
      \midrule
       \multirow{4}{*}{$10^{2}$}     & $10^0$  & $396$  & $424$  & $406$  & $353$  \\
                                      & $10^2$ & $337$  & $368$  & $351$  & $333$  \\
                                      & $10^4$ & $364$  & $352$  & $348$  & $332$  \\
                                      & $10^6$ & $345$  & $357$  & $361$  & $328$  \\
      \midrule
       \multirow{4}{*}{$10^{4}$}     & $10^0$  & $65$   & $65$   & $62$   & $60$   \\
                                      & $10^2$ & $64$   & $60$   & $56$   & $55$   \\
                                      & $10^4$ & $62$   & $60$   & $57$   & $55$   \\
                                      & $10^6$ & $63$   & $61$   & $58$   & $55$   \\
      \midrule
       \multirow{4}{*}{$10^{6}$}     & $10^0$   & $30$   & $30$   & $30$   & $28$   \\
                                      & $10^2$  & $34$   & $31$   & $29$   & $29$   \\
                                      & $10^4$  & $32$   & $31$   & $31$   & $29$   \\
                                      & $10^6$  & $33$   & $31$   & $31$   & $29$   \\
      \bottomrule
    \end{tabular}
    \caption{Number of MinRes iterations (c.f.~Example
        \ref{ex:mpet:tp}): \eqref{eq:mpet:vf} as discretized with
        Taylor-Hood type elements and an algebraic multigrid
        preconditioner of the form \eqref{eq:mpet:precond:naive}.  Of
        note is the fact that the number of iterations grow for $K_2$
      decreasing relative to $\xi_{2 \rightarrow 1} = 10^6$, and for
      increasing $N$.}
    \label{tab:mpet:tp:naive}
    \end{center}
  \end{table}
\end{example}

\subsection{Transformed MPET equations via change of variables}
\label{sec:mpet:transform}

In this subsection, we present MPET equations which are transformed via change of variables for 
construction of block preconditioners.  As in the MPT problem we will find an 
invertible linear map $P \in \R^{J \times J}$ that provides a fortuitous co-diagonalization; 
we will then consider the change of variables 
\begin{align*}
  p = \left(P\otimes I_W\right) \tilde{p},
\end{align*}
which will lead to a (partial) diagonalization, in the spirit of Lemma~\ref{lemma:congruence-change-of-variables}, 
for the transformed MPET system in the new unknowns $(u, p_0, \tilde{p})$.  %
For the discussions below let us give remarks on the block operators defined by $c_1$, $c_2$, $c_3$. 
Specifically, regarding $\alpha$ as a column vector, 
\begin{equation}
  \label{eq:def:C2}
  {C}_2 = \left(\lambda^{-1} \alpha\right) \otimes I_W , \quad
  {C}_3 =  - \tau K \otimes \Delta + \left(S + \tau E + L\right) \otimes I_W,
\end{equation}
with $K$ and $E$ as in~\eqref{eq:mpt:KD}, $L$ is the matrix $L_{ij} = \lambda^{-1}\alpha_{i}\alpha_{j}$, %
$S$ is the diagonal matrix such that its $j$-th entry is $s_j$, and $I_W$ is the 
identity (functional) on $W$; we recall that $Q$ is the $J$-fold cartesian 
product of $W$.  We will first describe the transformed MPET equations for general coordinate 
transformation, $P$. From the form of the transformed equations, we will extract 
the conditions for $P$ that yield a system that is suitable for the construction of  
parameter-robust block preconditioners.

Suppose we have an, fixed but otherwise arbitrary, invertible coordinate transformation matrix 
$P\in\mathbb{R}^{J\times J}$. Applying this transformation of variables to the semi-discretized
total pressure variational formulation of the MPET
equations~\eqref{eq:mpet:vf}, we obtain the following variational
formulation: find the displacement $u
\in V$, the total pressure $p_0 \in Q_0$ and the transformed pressures
$\tilde{p} = (\tilde{p}_1, \dots, \tilde{p}_J) \in Q$ such that
\begin{subequations}
  \label{eq:mpet:vf:transformed}
  \begin{alignat}{2}
	a(u, v) + b(v, p_0) &= \inner{f}{v} &&\quad \foralls v \in V, \\
    b(u, q_0) - c_1(p_0, q_0) - \tilde{c}_2(q_0, \tilde{p}) &= 0 &&\quad \foralls q_0 \in Q_0, \\
    - \tilde{c}_2(p_0, \tilde{q}) - \tilde{c}_3(\tilde{p}, \tilde{q}) &= \inner{g}{\left(P\otimes I_W\right) \tilde{q}} &&\quad \foralls \tilde{q} \in Q
  \end{alignat}
\end{subequations} 
where 
\begin{equation}
  \label{eq:def:tilde_c2_c3}
  \tilde{c}_2(q_0 , \tilde{q}) \equiv c_2(q_0, \left(P\otimes I_W\right) \tilde{q}), \qquad %
  \tilde{c}_3(\tilde{p}, \tilde{q}) \equiv c_3(\left(P\otimes I_W\right) \tilde{p}, \left(P\otimes I_W\right) \tilde{q}) .
\end{equation}
We define $\mathcal{\tilde{A}}_{\rm MPET}: V \times Q_0 \times Q \rightarrow (V \times Q_0 \times Q)^*$ as the operator corresponding to the bilinear form \eqref{eq:mpet:vf:transformed}. 
The operator form of the transformed system~\eqref{eq:mpet:vf:transformed} then reads as:
\begin{equation}
  \label{eq:mpet:tp:matrix:transformed}
  \mathcal{\tilde{A}}_{\rm MPET}
  \begin{pmatrix}
    u \\ p_0 \\ \tilde{p}
    \end{pmatrix} =
  \begin{pmatrix}
    f \\
    0 \\
    \tilde{g}
  \end{pmatrix},
  \quad
  \mathcal{\tilde{A}}_{\rm MPET}
  =  
  \begin{pmatrix} 
    A & B^T & \textbf{0} \\ 
    B & - C_1 & - \tilde{C_2}^* \\
    \textbf{0} & - \tilde{C_2} & - \tilde{C_3}
  \end{pmatrix} ,
\end{equation}
where $A = - 2 \Div( \mu \varepsilon)$, $B = \Div$
as before, and
$\tilde{g} = \left(P^T\otimes I_W\right) g$. By inserting~\eqref{eq:def:C2} and reordering, we
note that
\begin{align*}
  \tilde{C_2} &= P^T C_2  =   (\lambda^{-1} P^T \alpha) \otimes I_W ,
  \\
  \tilde{C_3} &= P^T C_3 P  
  = - \tau (P^T K P) \otimes \Delta + \left(P^T S P + P^T \left (\tau E + L \right ) P\right) \otimes I_W .
\end{align*}
For simplicity we will write
\begin{equation}
  \tilde \alpha = \left(P^T\otimes I_W\right) \alpha.
\end{equation}
We now look to apply Lemma~\ref{lemma:congruence-change-of-variables} with the choice 
of operators $S=K\otimes \Delta$ and $T = \left(\tau E + L\right)\otimes I_W$.  The 
matrices $K$ and $\tau E + L$ satisfy the required conditions and, thus,  
there exists (c.f.~Appendix~\ref{appdx:lemma:congruence-change-of-variables}) an invertible 
transformation $P$ simultaneously diagonalizing $K$ and $\tau E + L$ by congruence.  
That is, we have matrices $\tilde{K}$ and $\tilde{\Gamma}$ given by the formulas  
\begin{align}
  \label{eq:transformedK}
  \tilde{K} &= P^T K P = \diag ( \tilde{K}_1, \dots, \tilde{K}_J ), \\
  \label{eq:transformedGamma}
  \tilde{\Gamma} &= P^T \left (\tau E + L \right ) P = \diag ( \tilde{\gamma}_1, \dots, \tilde{\gamma}_J ). 
\end{align}
We point out that the storage coefficients $\{ s_j \}_{j=1}^J$ are not involved in this simultaneous diagonalization process.
This is critically important in order to achieve a preconditioner that is parameter-robust, 
even in the presence of vanishing storage coefficients.  For future reference we briefly note that 
\begin{equation}
  \label{eq:tilde:gamma}
  \tilde \gamma_j \ge  \tilde{\alpha}_j^2/\lambda, 
\end{equation}
follows from the definition of $\tilde{\Gamma}$ in \eqref{eq:transformedGamma} since $E$ 
is positive semi-definite and therefore $\tilde{\gamma}_j$ is greater than or equal to 
the $j$-th diagonal entry of the matrix 
$\left(P^T L P\right)_{ij} = \lambda^{-1} \tilde{\alpha}_i \tilde{\alpha}_j$.

We also remark that the following identity holds for $\tilde{c}_3$: 
\begin{align} \label{eq:tilde_c3_identity}
 \tilde{c}_3(\tilde p, \tilde q) &= \tau \sum_{j=1}^J \inner{\tilde{K}_j \Grad \tilde p_j}{\Grad \tilde q_j} %
  + \sum_{j=1}^J \inner{s_j \left(\left(P\otimes I_W\right) \tilde p)\right)_j}{\left(\left(P\otimes I_W\right) \tilde q\right)_j}, \\
  \notag &\quad + \tau \sum_{j=1}^J \sum_{i=1}^J \inner{\xi_{j \leftarrow i} ( \left(\left(P\otimes I_W\right) \tilde p\right)_j - \left(\left(P\otimes I_W\right) \tilde p\right)_i)}{\left(\left(P\otimes I_W\right) \tilde q\right)_j} \\%
 \notag&\quad + \inner{\lambda^{-1} \tilde{\alpha} \cdot \tilde p}{\tilde{\alpha} \cdot \tilde q}
\end{align}
where $\left(\left(P\otimes I_W\right)\tilde{p}\right)_j$ is the $j$-th component of $\left(P\otimes I_W\right) \tilde{p}$.

\subsection{Preconditioning of the transformed MPET system}
\label{sec:MPET:spectral}
In this subsection we show that a parameter-robust preconditioner can be constructed using an appropriate parameter-dependent norm.

We first define a parameter-dependent norm
\begin{equation}
  \label{eq:B:norm}
  \| (u, p_0, \tilde p) \|_{\tilde{\mathcal{B}}}^2
   = \| \varepsilon(u) \|_{2 \mu}^2
   + \| p_0 \|_{(2 \mu)^{-1}}^2
   + \sum_{j=1}^J \| \Grad \tilde p_j \|_{\tau \tilde K_j}^2
   + \sum_{j=1}^J \|  \tilde p_j \|_{\tilde{\gamma}_j}^2 
\end{equation}
and consider the associated block preconditioner of the form 
\begin{equation}
  \label{eq:mpet:preconditioner:transformed}
  \mathcal{\tilde{B}}_{\rm MPET} =
  \begin{pmatrix} (- 2 \mu \Delta)^{-1} & 0 & 0 & \cdots & 0 \\
    0 & 2 \mu I^{-1} &  0 & \cdots & 0 \\
    0 & 0 & (- \tau \tilde{K}_1 \Delta + \tilde{\gamma}_1 I )^{-1}  & \cdots & 0 \\
    \vdots & \vdots & \vdots & \ddots & \vdots \\
    0 & 0 &  0 & \cdots & (- \tau \tilde{K}_J \Delta + \tilde{\gamma}_J I )^{-1} 
  \end{pmatrix}.
\end{equation}

\begin{lemma}[Continuity]
  \label{lem:continuity}
  Let $\tilde{\mathcal{A}}_{\rm MPET}$ be defined by
  \eqref{eq:mpet:tp:matrix:transformed}, and consider the norm defined
  by~\eqref{eq:B:norm}. We assume that $2\mu \le M_0 \lambda$ for some $M_0 >0$ 
  and $s_j \lesssim \tilde{\gamma}_j$ for $1 \le j \le J$. 
  Then there exists a constant $C>0$, dependent on $M_0$, the constants in $s_j \lesssim \tilde{\gamma}_j$, the matrix $P$, and 
  the number of networks $J$ but independent of any other problem 
  parameters, such that
  \begin{equation}
    \inner{\tilde{\mathcal{A}}_{\rm  MPET} (u, p_0, \tilde p)}{(v, q_0, \tilde q)} 
    \leq C \| (u, p_0, \tilde p) \|_{\tilde{\mathcal{B}}} \| (v, q_0, \tilde q) \|_{\tilde{\mathcal{B}}},
  \end{equation}
  for all $(u, p_0, \tilde p), (v, q_0, \tilde q) \in V \times Q_0 \times Q$. 
\end{lemma}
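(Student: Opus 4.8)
The plan is to estimate each bilinear form appearing in $\tilde{\mathcal{A}}_{\rm MPET}$ separately by Cauchy--Schwarz in the weighted inner products, and then assemble the pieces so that every cross term is absorbed into the squared norms entering $\|\cdot\|_{\tilde{\mathcal{B}}}$. Write out
\[
  \inner{\tilde{\mathcal{A}}_{\rm MPET}(u,p_0,\tilde p)}{(v,q_0,\tilde q)}
  = a(u,v) + b(v,p_0) + b(u,q_0) - c_1(p_0,q_0) - \tilde c_2(q_0,\tilde p) - \tilde c_2(p_0,\tilde q) - \tilde c_3(\tilde p,\tilde q),
\]
and bound the terms one at a time. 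The term $a(u,v) = \inner{2\mu\,\strain(u)}{\strain(v)}$ is handled immediately by Cauchy--Schwarz in $\|\cdot\|_{2\mu}$. For $b(v,p_0) = \inner{\Div v}{p_0}$, I would use $|\Div v| \le \sqrt d\,|\strain(v)|$ pointwise (or the elementary identity $\Div v = \tr \strain(v)$ with $|\tr M|\le \sqrt d\,|M|$) together with the pairing $\inner{\Div v}{p_0} \le \|\Div v\|_{2\mu}\|p_0\|_{(2\mu)^{-1}}$, which fits exactly the first and second slots of $\|\cdot\|_{\tilde{\mathcal{B}}}$; likewise for $b(u,q_0)$. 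The term $c_1(p_0,q_0) = \inner{\lambda^{-1}p_0}{q_0}$ is controlled because the hypothesis $2\mu \le M_0\lambda$ gives $\lambda^{-1} \le M_0 (2\mu)^{-1}$, so $c_1(p_0,q_0) \le M_0 \|p_0\|_{(2\mu)^{-1}}\|q_0\|_{(2\mu)^{-1}}$.

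The next group is the coupling $\tilde c_2$ between $p_0$ and $\tilde p$. Recall $\tilde c_2(q_0,\tilde p) = \inner{\lambda^{-1}\tilde\alpha\cdot\tilde p}{q_0}$, so by Cauchy--Schwarz and then the triangle inequality on the finite sum $\tilde\alpha\cdot\tilde p = \sum_j \tilde\alpha_j \tilde p_j$,
\[
  \tilde c_2(q_0,\tilde p) \le \|q_0\| \, \Big\| \sum_{j=1}^J \lambda^{-1}\tilde\alpha_j \tilde p_j \Big\|
  \le \|q_0\| \sum_{j=1}^J \lambda^{-1}|\tilde\alpha_j|\,\|\tilde p_j\|.
\]
Here the key is to move the weights around: $\lambda^{-1}|\tilde\alpha_j| = \lambda^{-1/2}\cdot (\tilde\alpha_j^2/\lambda)^{1/2} \le \lambda^{-1/2}\,\tilde\gamma_j^{1/2}$ by the inequality $\tilde\gamma_j \ge \tilde\alpha_j^2/\lambda$ recorded in~\eqref{eq:tilde:gamma}, and $\|q_0\| \le \lambda^{1/2} \|q_0\|_{(2\mu)^{-1}} \cdot$(constant) — more precisely $\|q_0\|^2 = \inner{q_0}{q_0} \le 2\mu\,\|q_0\|_{(2\mu)^{-1}}^2$ pointwise only after multiplying by $2\mu$, so one uses $\|q_0\|^2 \le (\sup 2\mu)\|q_0\|_{(2\mu)^{-1}}^2$... rather, cleanly: $\lambda^{-1}|\tilde\alpha_j|\,\|q_0\|\,\|\tilde p_j\| \le \big(\lambda^{-1/2}|\tilde\alpha_j|\big)\cdot\big(\lambda^{-1/2}\|q_0\|\big)\cdot\|\tilde p_j\|$, and $\lambda^{-1/2}\|q_0\| \le M_0^{1/2}\|q_0\|_{(2\mu)^{-1}}$ again from $2\mu\le M_0\lambda$, while $\lambda^{-1/2}|\tilde\alpha_j|\,\|\tilde p_j\| \le \tilde\gamma_j^{1/2}\|\tilde p_j\| = \|\tilde p_j\|_{\tilde\gamma_j}$. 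Summing over $j$ and applying Cauchy--Schwarz in $\mathbb{R}^J$ (this is where the explicit $J$-dependence of $C$ enters) bounds $\tilde c_2(q_0,\tilde p)$ by $C(J,M_0)\|q_0\|_{(2\mu)^{-1}}\big(\sum_j\|\tilde p_j\|_{\tilde\gamma_j}^2\big)^{1/2}$, and symmetrically for $\tilde c_2(p_0,\tilde q)$.

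The remaining and most delicate term is $\tilde c_3(\tilde p,\tilde q)$, for which I would use the expanded identity~\eqref{eq:tilde_c3_identity}. The diffusion part $\tau\sum_j\inner{\tilde K_j\Grad\tilde p_j}{\Grad\tilde q_j}$ is Cauchy--Schwarz in $\|\Grad\cdot\|_{\tau\tilde K_j}$, and the final term $\inner{\lambda^{-1}\tilde\alpha\cdot\tilde p}{\tilde\alpha\cdot\tilde q}$ is handled exactly as the $\tilde c_2$ terms above using~\eqref{eq:tilde:gamma}. The storage term $\sum_j\inner{s_j((P\otimes I_W)\tilde p)_j}{((P\otimes I_W)\tilde q)_j}$ must be rewritten back in the untransformed variables, bounded by $\max_j s_j \cdot \|(P\otimes I_W)\tilde p\|\,\|(P\otimes I_W)\tilde q\|$, then $\|(P\otimes I_W)\tilde p\| \le \|P\|\,\|\tilde p\|$ brings in a factor depending only on $P$; the hypothesis $s_j\lesssim\tilde\gamma_j$ is then what converts $\max_j s_j\|\tilde p\|^2$ into $\lesssim\sum_j\|\tilde p_j\|_{\tilde\gamma_j}^2$ (again with a $J$- and $P$-dependent constant, since the transformation mixes components). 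The exchange term $\tau\sum_{i,j}\inner{\xi_{j\leftarrow i}(\cdots)}{\cdots}$ is the main obstacle: one must show it is controlled by $\sum_j\|\tilde p_j\|_{\tilde\gamma_j}^2$ even though $\tilde\gamma_j$ comes from diagonalizing $\tau E + L$, not $\tau E$ alone. The way through is to observe, using~\eqref{eq:E-pos-semidef}, that the exchange form equals $\tau\,\inner{(E\otimes I_W)(P\otimes I_W)\tilde p}{(P\otimes I_W)\tilde q}$ in the original variables, i.e.\ $\tau\inner{(P^TEP\otimes I_W)\tilde p}{\tilde q}$, and since $0 \le \tau P^TEP \le \tau P^TEP + P^TLP = \tilde\Gamma$ as quadratic forms (because $P^TLP\ge0$), Cauchy--Schwarz for the positive semidefinite form $\tilde\Gamma\otimes I_W$ gives the bound $\le \sum_j\tilde\gamma_j^{1/2}\|\tilde p_j\|\cdot\tilde\gamma_j^{1/2}\|\tilde q_j\| \le \big(\sum_j\|\tilde p_j\|_{\tilde\gamma_j}^2\big)^{1/2}\big(\sum_j\|\tilde q_j\|_{\tilde\gamma_j}^2\big)^{1/2}$. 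Finally, collecting all the term-by-term estimates and applying Cauchy--Schwarz in $\mathbb{R}^{3+\cdots}$ over the finitely many pieces yields the claimed inequality with $C = C(M_0, \text{const in } s_j\lesssim\tilde\gamma_j, P, J)$; I expect the bookkeeping of which weighted norm each factor lands in — especially ensuring the $L$-perturbation in $\tilde\Gamma$ does not spoil the exchange estimate — to be the only genuinely nontrivial point.
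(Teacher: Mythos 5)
Your proof is correct and follows essentially the same route as the paper: weighted Cauchy--Schwarz term by term, $\|\Div v\|\lesssim\|\varepsilon(v)\|$ for the $b$-terms, the hypothesis $2\mu\le M_0\lambda$ to trade $\lambda^{-1}$-weighted norms for $(2\mu)^{-1}$-weighted ones, the inequality \eqref{eq:tilde:gamma} to absorb $\|\tilde\alpha\cdot\tilde p\|_{\lambda^{-1}}$ into $\sum_j\|\tilde p_j\|_{\tilde\gamma_j}$, and $s_j\lesssim\tilde\gamma_j$ for the storage term. The only (cosmetic) difference is in the treatment of $\tilde c_3$: the paper works directly with the congruently diagonalized form, so the exchange and $L$ contributions appear at once as $\sum_j\|\tilde p_j\|_{\tilde\gamma_j}\|\tilde q_j\|_{\tilde\gamma_j}$, whereas you bound them separately via $0\le\tau P^TEP\le\tilde\Gamma$ as quadratic forms --- which is precisely the content of that diagonalization.
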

\begin{proof} 
  By redistributing the material parameter weights and
  the Cauchy-Schwarz inequality, we obtain the preliminary upper bound
\begin{equation*}
  \begin{split}
    \inner{\tilde{\mathcal{A}}_{\rm  MPET} (u, p_0, \tilde p)}{(v, q_0, \tilde q)} 
    \leq Z_1 + Z_2 + Z_3 =: Z,
  \end{split}
\end{equation*}
where
\begin{align*}
  Z_1 &= \| \varepsilon(u) \|_{2 \mu} \| \varepsilon(v) \|_{2 \mu} + \| p_0 \|_{(2 \mu)^{-1}} \| \Div v \|_{2 \mu}
  + \| q_0 \|_{(2 \mu)^{-1}} \| \Div u \|_{2 \mu}, \\
  Z_2 &= \| p_0 \|_{\lambda^{-1}} \| q_0 \|_{\lambda^{-1}} + \| \tilde \alpha \cdot \tilde q \|_{\lambda^{-1}} \| p_0 \|_{\lambda^{-1}}
  + \| \tilde \alpha \cdot \tilde p \|_{\lambda^{-1}} \| q_0 \|_{\lambda^{-1}}, \\ 
  Z_3 &= \ssum_{j=1}^J \left ( \| \Grad \tilde p_j \|_{\tau \tilde K_j} \| \Grad \tilde q_j \|_{\tau \tilde K_j} 
     + \| \tilde p_j \|_{\tilde \gamma_j} \| \tilde q_j \|_{\tilde \gamma_j} + \inner{s_j (P \tilde{p})_j}{(P \tilde{q})_j} \right ).
\end{align*}
Since $\| \Div u \| \leq \| \varepsilon(u) \|$ and by $2\mu \le  M_0 \lambda$ and the assumptions on $s_j$ and $\tilde{\gamma}_j$, it follows that
\begin{equation*}
  \begin{split}
    Z \lesssim &\left ( \| \varepsilon(u) \|_{2 \mu} + \| p_0 \|_{(2 \mu)^{-1}} + \| \tilde \alpha \cdot \tilde p \|_{\lambda^{-1}}
    + \ssum_{j=1}^J \left ( \| \Grad \tilde p_j \|_{\tau \tilde K_j} + \| \tilde p_j \|_{\tilde \gamma_j} \right )
    \right ) \\
    &\times \left ( \| \varepsilon(v) \|_{2 \mu} + \| q_0 \|_{(2 \mu)^{-1}} +  \| \tilde \alpha \cdot \tilde q \|_{\lambda^{-1}}
    + \ssum_{j=1}^J \left ( \| \Grad \tilde q_j \|_{\tau \tilde K_j} + \| \tilde q_j \|_{\tilde \gamma_j} \right )
    \right ) .
  \end{split}
\end{equation*}
By the triangle inequality and \eqref{eq:tilde:gamma} we obtain 
\begin{equation}
\| \tilde \alpha \cdot \tilde p \|_{\lambda^{-1}} \le \sum_{j=1}^J \| \tilde{p}_j \|_{\tilde{\alpha}_j^2 / \lambda} \le \sum_{j=1}^J \| \tilde p_j \|_{\tilde \gamma_j} 
\end{equation}
and it completes the proof.
\end{proof}

\begin{lemma}[Inf-sup condition] \label{lem:inf-sup}
  Let $\tilde{\mathcal{A}}_{\rm MPET}$, $\tilde{\mathcal{B}}_{\rm
    MPET}$ and all assumptions be as in
  Lemma~\ref{lem:continuity}. Then, there exists a constant $C > 0$,
  dependent on $M_0$, the constants in $s_j \lesssim \tilde{\gamma}_j$, and the number of networks $J$ but independent of other
  material parameters, such that
\begin{equation}
  \label{eq:inf-sup}
  \adjustlimits
  \inf_{(u,p_0, p)}
  \sup_{(v,q_0, q)}
  \frac{\inner{\tilde{\mathcal{A}}_{\rm MPET}(u, p_0, p)}{(v, q_0, q)}}{\|(u, p_0, p)\|_{\tilde{\mathcal{B}}} \| (v, q_0, q)\|_{\tilde{\mathcal{B}}}} \ge C ,
\end{equation}
where the $\inf$ and $\sup$ are taken over the non-vanishing elements in $V \times Q_0 \times Q$. 
\end{lemma}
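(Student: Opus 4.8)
The plan is to verify the Brezzi inf-sup (well-posedness) condition for the transformed saddle point operator $\tilde{\mathcal{A}}_{\rm MPET}$ by exhibiting, for each nonzero $(u, p_0, \tilde p)$, a suitable test function $(v, q_0, \tilde q)$ realizing a uniform lower bound. The natural decomposition is to treat the block $2\times 2$ structure with the $(2,2)$-block $-C$ (where $C$ is built from $c_1, \tilde c_2, \tilde c_3$) being negative semidefinite: one should show (i) $C$ together with the $\tilde{\mathcal{B}}$-norm on the pressure space $Q_0 \times Q$ is bounded and coercive on the relevant subspace, and (ii) the divergence form $b(v, q_0) = \inner{\Div v}{q_0}$ satisfies an inf-sup condition $\sup_{v \in V} \inner{\Div v}{q_0}/\|\varepsilon(v)\|_{2\mu} \gtrsim \|q_0\|_{(2\mu)^{-1}}$, which is the classical Stokes inf-sup for the pair $(H^1_\Gamma(\Omega;\R^d), L^2(\Omega))$ rescaled by $2\mu$. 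Combining these via the standard Brezzi/Babu\v{s}ka machinery for perturbed saddle point problems (as in the theory underpinning \cite{LeeEtAl2017}) yields the claim.

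Concretely, I would proceed as follows. First, fix $(u, p_0, \tilde p)$ with $\|(u,p_0,\tilde p)\|_{\tilde{\mathcal B}} = 1$. Test with $(v, q_0, \tilde q) = (u, -p_0, -\tilde p)$ to get the ``diagonal'' contribution
\begin{equation*}
  \inner{\tilde{\mathcal{A}}_{\rm MPET}(u,p_0,\tilde p)}{(u,-p_0,-\tilde p)}
   = a(u,u) + c_1(p_0,p_0) + \tilde c_3(\tilde p, \tilde p)
   \gtrsim \|\varepsilon(u)\|_{2\mu}^2 + \|p_0\|_{\lambda^{-1}}^2 + \sum_j \left(\|\Grad \tilde p_j\|_{\tau \tilde K_j}^2 + \|\tilde p_j\|_{\tilde\gamma_j}^2\right),
\end{equation*}
using \eqref{eq:tilde_c3_identity}, the congruence diagonalization \eqref{eq:transformedK}--\eqref{eq:transformedGamma}, positivity of $E$ and $S$, and the fact that the $\lambda^{-1}\tilde\alpha\cdot\tilde p$ term is nonnegative. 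This controls everything except the component $\|p_0\|_{(2\mu)^{-1}}^2$ of the norm, since only $\|p_0\|_{\lambda^{-1}}^2$ appears and $(2\mu)^{-1} \ge (M_0\lambda)^{-1}$ gives only a one-sided bound. To recover control of $\|p_0\|_{(2\mu)^{-1}}$, I invoke the Stokes inf-sup condition: there is $w \in V$ with $\inner{\Div w}{p_0} \gtrsim \|p_0\|_{(2\mu)^{-1}}^2$ and $\|\varepsilon(w)\|_{2\mu} \lesssim \|p_0\|_{(2\mu)^{-1}}$. Then test with $(v, q_0, \tilde q) = (u + \delta w, -p_0, -\tilde p)$ for a small fixed $\delta > 0$; the cross terms $a(u, \delta w)$, $b(\delta w, p_0)$ combine so that, after absorbing $a(u,\delta w)$ and the coupling $\tilde c_2$-terms by Young's inequality (the $\tilde c_2$ contributions are controlled since $\|\tilde\alpha\cdot\tilde q\|_{\lambda^{-1}} \le \sum_j \|\tilde q_j\|_{\tilde\gamma_j}$ by \eqref{eq:tilde:gamma} exactly as in the continuity proof), one gets a lower bound $\gtrsim \|(u,p_0,\tilde p)\|_{\tilde{\mathcal B}}^2$. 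Finally one checks $\|(u+\delta w, -p_0, -\tilde p)\|_{\tilde{\mathcal B}} \lesssim \|(u,p_0,\tilde p)\|_{\tilde{\mathcal B}}$ so dividing through gives \eqref{eq:inf-sup}.

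The main obstacle is the mismatch between the ``energy'' weight $\lambda^{-1}$ naturally appearing on $p_0$ through $c_1$ and the weight $(2\mu)^{-1}$ appearing in the preconditioner norm $\tilde{\mathcal B}$: the coercivity argument alone only delivers $\|p_0\|_{\lambda^{-1}}$, which is weaker when $\mu \ll \lambda$. Handling this is precisely why the hypothesis $2\mu \le M_0\lambda$ and the Stokes inf-sup correction with the divergence operator are needed, and one must be careful that all the constants (from the Korn/Stokes inequalities, from the congruence matrix $P$, from $M_0$, and from $s_j \lesssim \tilde\gamma_j$) are tracked so that the final constant depends only on $M_0$, the $s_j$-$\tilde\gamma_j$ constants, $P$, and $J$, but not on $\lambda$, $\tau$, the $K_j$, or the exchange coefficients. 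A secondary technical point is verifying that the congruence transformation genuinely removes the coupling in $\tilde c_3$ so that the sum over $j$ decouples into the per-network quadratic forms matching the diagonal blocks of $\tilde{\mathcal B}_{\rm MPET}$; this is immediate from \eqref{eq:transformedK}--\eqref{eq:transformedGamma} and \eqref{eq:tilde_c3_identity} but should be stated. With the continuity of Lemma~\ref{lem:continuity} and this inf-sup bound, the spectral equivalence $\kappa(\tilde{\mathcal B}_{\rm MPET}\tilde{\mathcal A}_{\rm MPET}) \lesssim 1$ then follows from the standard theory of Section~\ref{sec:preconditioning}.
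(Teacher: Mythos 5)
Your proposal follows essentially the same route as the paper's proof: test with $(v,q_0,\tilde q)=(u+\delta w,-p_0,-\tilde p)$ where $w$ realizes the Stokes/Korn inf-sup for $p_0$ in the $(2\mu)^{-1}$-weighted norm, absorb the cross terms using $2\mu \le M_0\lambda$ and the bound $\|\tilde\alpha\cdot\tilde p\|_{\lambda^{-1}}\le\sum_j\|\tilde p_j\|_{\tilde\gamma_j}$ from \eqref{eq:tilde:gamma}, and check that the test function is $\tilde{\mathcal B}$-norm bounded. One slip to fix: your first display for the purely diagonal test omits the cross term $2\tilde c_2(p_0,\tilde p)$, so the claimed coercivity of that test alone is not valid (the cross term can cancel $c_1(p_0,p_0)+\sum_j\|\tilde p_j\|^2_{\tilde\gamma_j}$ entirely); the paper absorbs it by completing the square against the extra slack $\delta\|p_0\|^2_{(2\mu)^{-1}}$ contributed by $w$, which is precisely the weighted Young step you defer to the combined test, so your final argument goes through.
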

\begin{proof} 
  Consider any $(u, p_0, \tilde p) \in V \times Q_0 \times Q$, and choose $\tilde q
  = - \tilde p$, and $q_0 = - p_0$. Let $w \in V$ satisfy 
  \begin{equation}
    \label{eq:assumpt:w}
    \inner{\Div w}{p_0} = \|p_0\|_{(2\mu)^{-1}}^2, \quad \| \varepsilon(w) \|_{2\mu} \leq C_0 \|p_0\|_{(2\mu)^{-1}} .
  \end{equation}
  for a $C_0 > 0$ depending on the domain $\Omega$ via Korn's
  inequality, and next choose $v = u + 2\delta w$ for $\delta > 0$ to
  be further specified. We note that, with this choice of $v, q_0$,
  and $q$,
  \begin{equation*}
    \| (v, q_0, \tilde q) \|_{\tilde{\mathcal{B}}} \lesssim \| (u, p_0, \tilde p) \|_{\tilde{\mathcal{B}}},
  \end{equation*}
  with inequality constant depending only on the domain $\Omega$ and the choice of $\delta$ since
  \begin{equation*}
    \| \varepsilon(v) \|_{2 \mu} \leq \| \varepsilon(u) \|_{2 \mu} + 2\delta C_0 \| p_0 \|_{(2 \mu)^{-1}} \lesssim \| (u, p_0, \tilde p) \|_{\tilde{\mathcal{B}}}.
  \end{equation*}
  Therefore, it suffices to show that
  \begin{equation}
    \label{eq:lower:bound}
    \inner{\tilde{\mathcal{A}}_{\rm MPET}(u, p_0, \tilde p)}{(v, q_0, \tilde q)}
    \gtrsim  \| (u, p_0, \tilde p) \|_{\tilde{\mathcal{B}}}^2.
  \end{equation}
  
  Using the definition of $\tilde{\mathcal{A}}_{\rm MPET}$ together
  with~\eqref{eq:assumpt:w}, we find that
  \begin{multline}
    \label{eq:A_mpet_lower}
      \inner{\tilde{\mathcal{A}}_{\rm MPET}(u, p_0, \tilde p)}{(v, q_0, \tilde q)}
      = \| \varepsilon(u) \|^2_{2\mu} + 2\delta \inner{\varepsilon(u)}{\varepsilon(w)}_{2 \mu} + 2\delta \| p_0 \|^2_{(2 \mu)^{-1}} \\
      + c_1(p_0, p_0) + 2 \tilde{c}_2(p_0, \tilde{p}) + \tilde{c}_3(\tilde{p}, \tilde{p}) .
  \end{multline}
  Note that $c_1(p_0, p_0) = \| p_0\|^2_{\lambda^{-1}}$, $\tilde{c}_2(p_0, \tilde{p}) = \inner{\tilde \alpha \cdot \tilde p}{p_0}_{\lambda^{-1}}$, and 
  \begin{align*}
    \tilde{c}_3(\tilde{p}, \tilde{p}) = \ssum_{j=1}^J (\| \Grad \tilde p_j \|^2_{\tau \tilde K_j} +  \| \tilde p_j \|^2_{\tilde \gamma_j} + \inner{s_j (P \tilde{p})_j }{(P \tilde{p})_j} )
  \end{align*}
  from the definitions of $\tilde{c}_2$, $\tilde{c}_3$, and the congruent diagonalization. Moreover, we have 
  \begin{align}
    \label{eq:tmp-ineq}
    \ssum_{j=1}^J \| \tilde p_j \|^2_{\tilde \gamma_j} \ge \| \tilde{\alpha} \cdot \tilde p \|^2_{\lambda^{-1}} 
  \end{align}
  by observing the identity \eqref{eq:tilde_c3_identity}. Then we obtain
  \begin{align*}
    &  \delta \| p_0 \|_{(2\mu)^{-1}}^2  +  c_1(p_0, p_0) + 2 \tilde{c}_2( p_0, \tilde{p}) + \tilde{c}_3( \tilde{p}, \tilde{p})  \\
    &\ge \delta \| p_0 \|_{(2\mu)^{-1}}^2  +  \| p_0 \|_{\lambda^{-1}}^2 + 2 \inner{\tilde \alpha \cdot \tilde p}{p_0}_{\lambda^{-1}} + \ssum_{j=1}^J (\| \tilde p_j \|_{\tilde \gamma_j}^2 + \| \Grad \tilde p_j \|^2_{\tau \tilde K_j})  \\
    &\ge  \left( {\delta}/{M_0} + 1 \right) \| p_0 \|_{\lambda^{-1}}^2  + 2 \inner{\tilde \alpha \cdot \tilde p}{p_0}_{\lambda^{-1}} + \ssum_{j=1}^J (\| \tilde p_j \|_{\tilde \gamma_j}^2 + \| \Grad \tilde p_j \|^2_{\tau \tilde K_j}) \\
    &=  \| \sqrt{{\delta}/{M_0} + 1 } p_0 + \tilde \alpha \cdot  \tilde p \|_{\lambda^{-1}}^2 - \left(\frac{\delta}{M_0} + 1 \right)^{-1} \| \tilde{\alpha} \cdot \tilde{p} \|_{\lambda^{-1}}^2  \\
    &\quad + \ssum_{j=1}^J (\| \tilde p_j \|^2_{\tilde{\gamma}_j} + \| \Grad \tilde p_j \|^2_{\tau \tilde K_j}) \\
    &\ge   \ssum_{j=1}^J \left( \frac{\delta}{M_0 + \delta} \| \tilde p_j \|^2_{\tilde{\gamma}_j} +  \| \Grad \tilde p_j \|^2_{\tau \tilde K_j} \right)  
  \end{align*}
  where the second inequality in the above follows from the assumption $2 \mu \le M_0 \lambda$ and the third inequality follows from \eqref{eq:tmp-ineq}. 
  Thus,
  \begin{align}
    \label{eq:infsup:1}
    \notag &\inner{\tilde{\mathcal{A}}_{\rm MPET}(u, p_0, \tilde p)}{(v, q_0, \tilde q)} \\
      &\quad \geq \| \varepsilon(u) \|^2_{2\mu} + \delta \inner{\varepsilon(u)}{\varepsilon(w)}_{2 \mu} + \delta \| p_0 \|^2_{(2 \mu)^{-1}} + \ssum_{j=1}^J\| \Grad \tilde p_j \|^2_{\tau \tilde K_j} \\
    \notag &\qquad + \frac{\delta}{M_0 + \delta} \ssum_{j=1}^J  \| \tilde p_j \|^2_{\tilde{\gamma}_j}.
  \end{align}
  On the other hand, the Cauchy-Schwarz inequality, the definition of
  $w$, and Young's inequality give that
  \begin{equation}
    \label{eq:delta}
    \delta |\inner{\varepsilon(u)}{\varepsilon (w)}_{2 \mu}|
    \leq \delta C_0 \| \varepsilon(u)\|_{2 \mu} \| p_0\|_{(2 \mu)^{-1}}
    \leq \frac{1}{2} \| \varepsilon(u)\|^2_{2 \mu} + \frac{1}{2} \delta^2 C_0^2 \| p_0\|^2_{(2 \mu)^{-1}} .
  \end{equation}
  Inserting the negation of~\eqref{eq:delta} as a lower bound
  in~\eqref{eq:infsup:1}, we thus obtain that
  \begin{multline*}
      \inner{\tilde{\mathcal{A}}_{\rm MPET}(u, p_0, \tilde p)}{(v, q_0, \tilde q)} \\ 
      \geq \frac{1}{2} \| \varepsilon(u) \|^2_{2\mu} + \delta (1 - \frac{1}{2} \delta C_0^2) \| p_0 \|^2_{(2 \mu)^{-1}} 
      + \ssum_{j=1}^J \left( \| \Grad \tilde p_j \|^2_{\tau \tilde K_j} +  \frac{\delta}{M_0 + \delta} \| \tilde p_j \|^2_{\tilde{\gamma}_j} \right).
  \end{multline*}
  By choosing $\delta$, in particular e.g.~by letting $\delta <
  2/C_0^2$, the estimate \eqref{eq:lower:bound} follows.
\end{proof}

\begin{rmk}
  In Lemma~\ref{lem:continuity} and Lemma~\ref{lem:inf-sup} we assumed $s_j \lesssim \tilde{\gamma}_j$ for $1 \le j \le J$ and it covers the cases that $s_j$'s are degenerate. This assumption can be removed if $\lambda^{-1} \lesssim s_j$, $1 \le j \le J$, hold with constants of scale 1. For parameter-robust preconditioners we use $P$ which gives a different simultaneous diagonalization. More precisely, we consider $P$ satisfying \eqref{eq:transformedK}, and $P^T (S + \tau E + L) P = \diag ( \tilde{\gamma}_1, \dots, \tilde{\gamma}_J )$ instead of \eqref{eq:transformedGamma}. The norm \eqref{eq:B:norm} with these new $\tilde{K}_j$'s and $\tilde{\gamma}_j$'s, will be used to obtain parameter-robust preconditioners. Since the modification of proofs is straightforward and most steps are almost same, we omit the detailed proofs. 
\end{rmk}

For concreteness, we here illustrate the form of the MPET equations
and of the proposed preconditioner in a specific example. 
\begin{example}
  We consider the simple case of two networks with $K_1 = K_2 = 1.0$,
  $s_1 = s_2 = 1.0$, $\alpha_1 = \alpha_2 = 0.5$, $\lambda = 1.0$,
  $\xi_{1 \rightarrow 2} = 0.0$, and $\tau = 1.0$. The transformation
  matrix in this case is
  \begin{equation}
    P = \frac{1}{\sqrt{2}} \begin{pmatrix}
      1 & -1 \\
      1 & 1
    \end{pmatrix}.
  \end{equation}
  We remark that $P$ is not normalized. 
  The associated transformed MPET operator (expanded),
  cf.~\eqref{eq:mpet:tp:matrix:transformed} and associated
  definitions, is then
  \begin{equation}
    \tilde{\mathcal{A}}_{\rm MPET} =
    \begin{pmatrix}
      - 2 \mu \Div \varepsilon & - \Grad & 0 & 0 \\
      \Div & - \lambda^{-1} & - (\sqrt 2 \lambda)^{-1} &  0 \\
      0 & - (\sqrt 2 \lambda)^{-1} & - \Delta + \frac{3}{2}  &   0\\
      0 & 0 &  0 & - \Delta + 1\\
    \end{pmatrix},
  \end{equation}
  and the proposed preconditioner will be in the following form:
  \begin{equation}
    \mathcal{\tilde{B}}_{\rm MPET} =
    \begin{pmatrix} (- 2 \mu \Div \varepsilon)^{-1} & 0 & 0 & 0 \\
      0 & (2 \mu)^{-1} &  0 & 0 \\
      0 & 0 & (- \Delta + 1)^{-1}  &     0\\
      0 & 0 &  0 & (- \Delta + 1)^{-1}
    \end{pmatrix}.
  \end{equation}
The objective of this example was to illustrate the layout of the operators in a simple case. 
The results for more general numerical examples will be presented later.
  \end{example}

\subsection{Numerical performance}

\begin{example}
  \label{ex:mpet:performance}
  In this example we demonstrate the robustness of the block
  diagonal preconditioner \eqref{eq:mpet:preconditioner:transformed}
  for a mixed finite element discretization of the transformed total
  pressure MPET equations~\eqref{eq:mpet:tp:matrix:transformed}.  We
  consider the same test case, discretization and solver set-up as
  described in Example~\ref{ex:mpet:tp}; the new preconditioner is the
  only modification. Parameter ranges are as follows: $K_2
    \in[10^{-6},10^{6}]$, $\xi_{1\leftarrow 2} \in [10^{-6},10^{6}]$
    and $\lambda \in [1,10^{6}]$. 

The resulting number of iterations are shown in Figure \ref{fig:boat3}
for $K_2 \in [10^{-6},1]$ and $\xi_{1\leftarrow 2} \in [1,10^{6}]$;
omitted values demonstrated similar behaviour. Each of the subplots in
Figure \ref{fig:boat3} represent a fixed choice of $K_2$ and
$\xi_{1\leftarrow 2}$. In each subplot four curves are shown; these
curves show the number of MinRes iterations corresponding to different
values of $\lambda$, indicated by their respective symbols, at
discretization levels $N=16$, $32$, $64$ and $128$.  The stopping
criterion was
\[ 
(\tilde{\mathcal{B}} r_k , r_k)/(\tilde{\mathcal{B}} r_0, r_0) \leq 10^{-6}
\] 
where $r_k$ is the residual of the $k$-th iteration. We observe that
the number of iterations is moderate in general. Moreover, the number
of iterations does not grow for smaller $K_2$'s relative to larger
$\xi_{1 \leftarrow 2}$ or larger $N$ -- in contrast to what was
observed for Example \ref{ex:mpet:tp}. 
\end{example}

\begin{figure}
  \includegraphics[width=\textwidth]{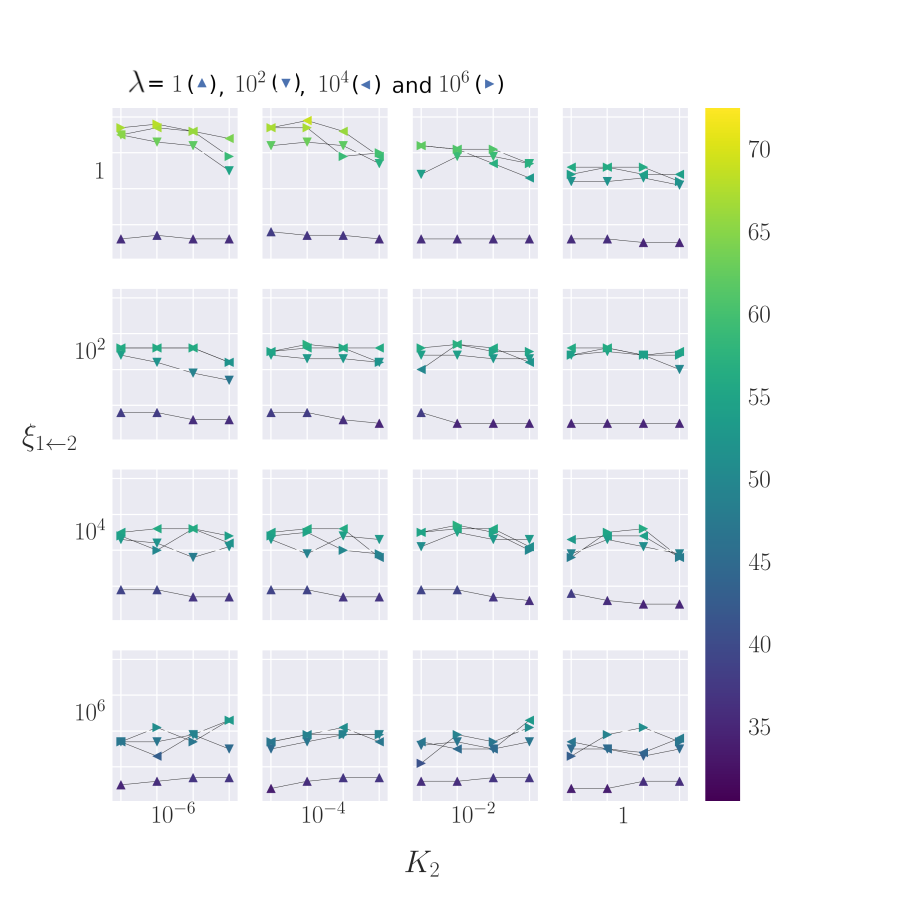}
\caption{Number of MinRes iterations: \eqref{eq:mpet:vf} discretized with 
Taylor-Hood type elements and algebraic multigrid, for $s_1=s_2=1.0$.  $K_2$ varies along the horizontal 
axis while the vertical axis shows variations in $\xi_{1\leftarrow 2}$ for 
$K_2$ fixed.  Each subplot contains four piecewise linear curves; each curve is 
decorated by a symbol indicating a corresponding value of $\lambda$ and corresponds 
to results for discretizations $N=16,32,64$ and $128$.}
  \label{fig:boat3}
\end{figure}

\begin{figure}
  \includegraphics[width=\textwidth]{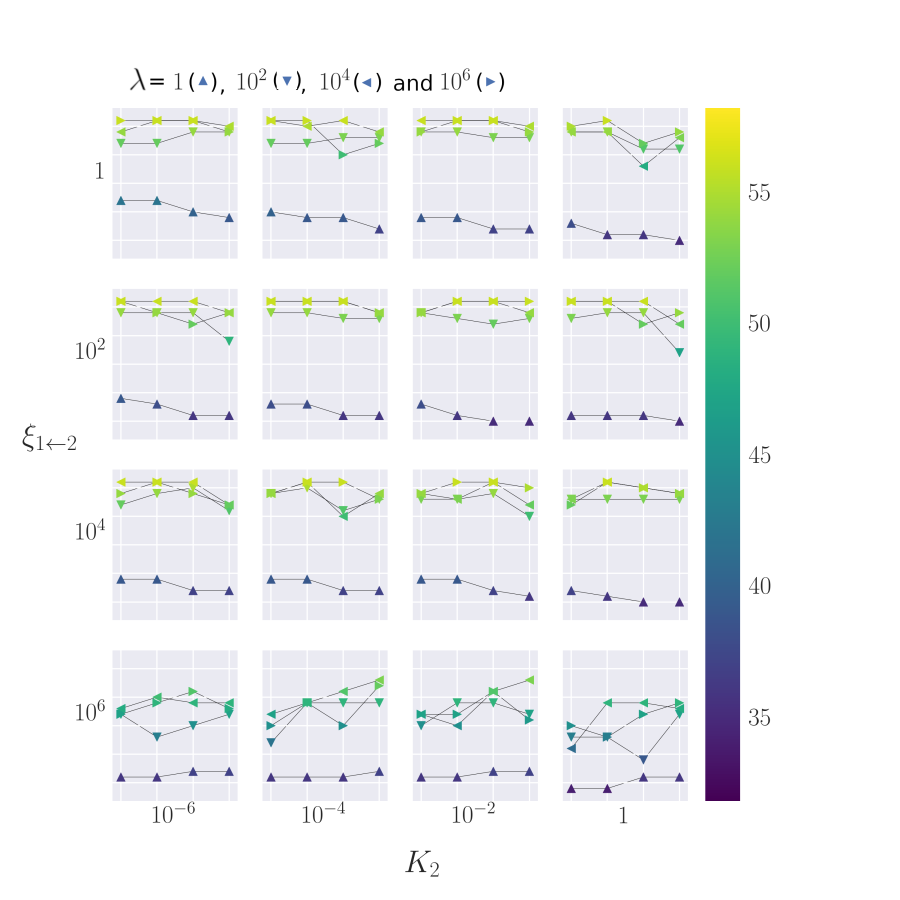}
\caption{Number of MinRes iterations: \eqref{eq:mpet:vf} discretized with 
Taylor-Hood type elements and algebraic multigrid for $s_1 = s_2 =  0$.  $K_2$ varies along the horizontal 
axis while the vertical axis shows variations in $\xi_{1\leftarrow 2}$ for 
$K_2$ fixed.  Each subplot contains four piecewise linear curves; each curve is 
decorated by a symbol indicating a corresponding value of $\lambda$ and corresponds 
to results for discretizations $N=16,32,64$ and $128$.}
  \label{fig:boat4}
\end{figure}

\begin{example}
\label{ex:footing-problem}
In this final example we present a modified version of a 3D footing problem 
\cite{gaspar2008,rodrigo2018,storvik2019}; we demonstrate the problem for two 
fluid networks ($J=2$) and use the standard unit cube, $\Omega = [0,1]^3 \subset \R^3$, 
as computational domain.  At the base of the domain, homogeneous Dirichlet conditions 
for the displacement and for both fluid pressures are imposed.At the top-most surface 
of the domain, i.e.~$z=1$, a load of $0.1 N/m^2$ is applied on the square 
$[0.25,0.75]\times[0.25,0.75]$, and a no flow condition is applied to the fluid 
pressures. For all remaining boundary sides of the domain, the zero stress 
condition is applied alongside a homogeneous Dirichlet condition for the fluid 
pressures.  In the numerical experiments we vary the exchange coefficient 
$\xi_{1\leftarrow 2},$ and the mesh size, the other physical parameters are 
reported in Table~\ref{tab:footingproblem:parameters}.
\begin{figure}[ht]
\centering
\begin{subfigure}{.32\textwidth}
  \centering
  \includegraphics[width=\linewidth]{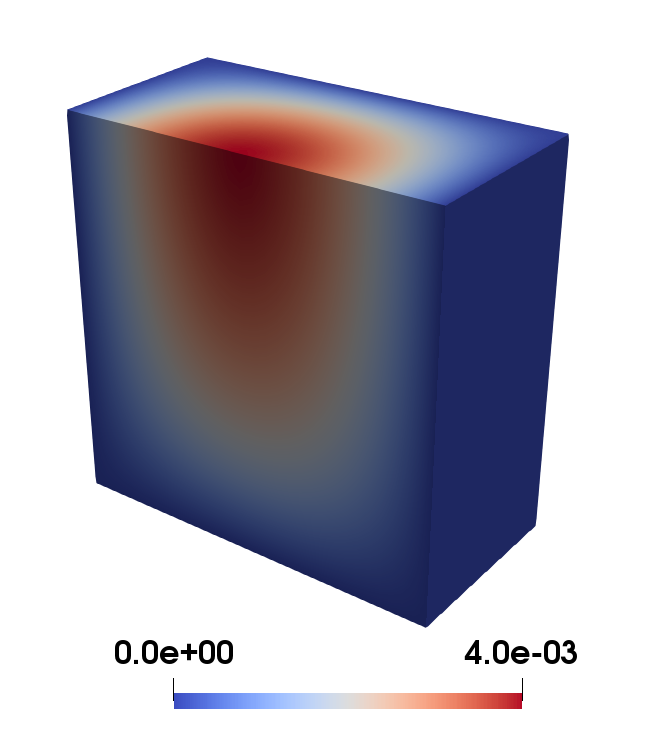}  
  \caption{fluid pressure $p_1$}
  \label{fig:sub-first}
\end{subfigure}
\begin{subfigure}{.32\textwidth}
  \centering
  \includegraphics[width=\textwidth]{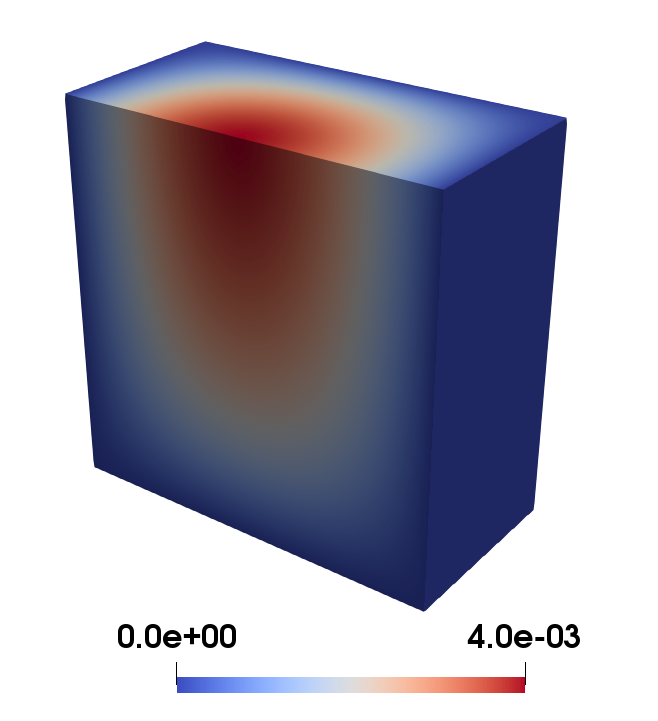}  
  \caption{fluid pressure $p_2$}
  \label{fig:sub-second}
\end{subfigure}
\begin{subfigure}{.32\textwidth}
  \centering
  \includegraphics[width=\textwidth]{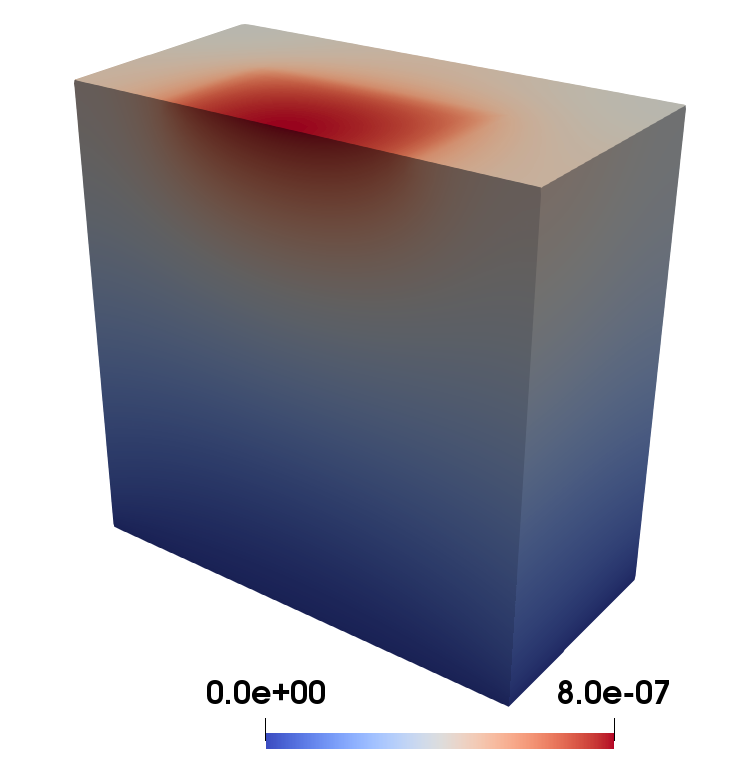}  
  \caption{displacement magnitude $u$}
  \label{fig:sub-third}
\end{subfigure}
\hfill
\caption{A 3D Footing Problem, solution for $1/h = 32,$ $\xi_{1\leftarrow 2} = 1.0,$ at $t= 0.5$}
\label{fig:footing-solution}
\end{figure}

\begin{table}[h!]
\centering
\begin{tabular}{lllll}
 Property & Symbol &  Value & Units  \\
 \hline
 Young's modulus & $E$ & $3\times 10^4$ &  Pa  \\
 Poisson ratio & $\nu$ & 0.45 & [-]  \\
 Hydraulic conductivities & $K_1, K_2$ & $10^{-6}$ &  m$^2$(Pa s)$^{-1}$\\
 Storage coefficients & $s_1, s_2$ & $0.0$ & Pa$^{-1}$ \\
 Biot coefficient & $\alpha_1, \alpha_2$ & $0.5$ & [-] \\
 \hline
\end{tabular}
\caption{Parameters used in the numerical simulations}
\label{tab:footingproblem:parameters}
\end{table}

In table~\ref{tab:footingproblem:iterations}, we report the number of MinRes iterations for each time-step (from $0.1$ to $0.5$), varying the mesh size and exchange parameters. The initial guess for the solution is set to zero. Similarly to what observed in Example \ref{ex:mpet:performance}, the number of iterations is moderate also for this 3D case. 
In Fig. \ref{fig:footing-solution} the solution for $t=0.5$ is shown.
\begin{table}[ht]
\centering
\begin{tabular}{l|l|l|l|l|l|l}
    \hline
    \multirow{2}{*}{h} & \multirow{2}{*}{$\xi_{1 \leftarrow 2}$} & \multicolumn{5}{c}{Number of iterations}\\
    \cline{3-7}
    & &$t=0.1$ & $t=0.2$ & $t=0.3$ & $t=0.4$ & $t=0.5$ \\ 
    \hline
    \multirow{2}{*}{$1/8$} & $1.0 \times 10^{-6}$& $87$ & $97$& $97$ & $97$ & $97$ \\
						   & $1.0$ & $89$ & $102$ & $102$ & $102$ & $102$ \\
	\hline					   
    \multirow{2}{*}{$1/16$} & $1.0 \times 10^{-6}$ & $90$ & $102$ & $102$ & $102$ & $102$ \\
						   & $1.0$ & $93$ & $108$ & $109$ & $107$ & $109$ \\
	\hline    
    \multirow{2}{*}{$1/32$} & $1.0 \times 10^{-6}$& $95$ & $107$ & $107$ & $107$ & $107$\\
						    & $1.0$ & $98$ & $112$ & $112$ & $114$ & $111$ \\
	\hline
\end{tabular}
\caption{MinRes iterations for the footing problem (c.f.~Example~\ref{ex:footing-problem}).}
\label{tab:footingproblem:iterations}
\end{table}
 \end{example}

\section{Conclusions}
\label{sec:conclusion}

In this paper, we have presented a new strategy for decoupling the
total-pressure variational formulation of the multiple-network
poroelasticity equations. The decoupling strategy is based on a
transformation via a change of variables, allowing for simultaneous
diagonalization by congruence of the equation operators. In
particular, the transformed equations are readily amenable for
block--diagonal preconditioning. Moreover, we have proposed a
block-diagonal preconditioner for the transformed system and shown
theoretically that the preconditioner and the equation operator are
norm equivalent, independently of the material parameters, under
reasonable parameter assumptions. The theoretical results are
supported by numerical examples. Combined, these results allow the
efficient iterative solution of the multiple-network poroelasticity
equations, even in the case of nearly incompressible materials.

We note that our strategy is based on spatially constant material
parameters. The applicability of this approach for spatially varying
parameters has not yet been considered.

\appendix
\section{Proof of Lemma \ref{lemma:congruence-change-of-variables}}
\label{appdx:lemma:congruence-change-of-variables}
We first recall a basic \cite{horn1990matrix} definition and result for posterity.
\begin{definition} A matrix $C\in\mathbb{C}^{n\times n}$ is \textit{diagonalizable} if 
there exists an invertible transformation, $P$, such that $P^{-1} C P$ is diagonal. 
The matrix $C$ is called \textit{diagonalizable by congruence} if there exists $P$, 
not necessarily invertible, such that $P^T C P$ is diagonal.  
\end{definition}

\begin{thm}[4.5.17a-b p. 287, \cite{horn1990matrix}]
Suppose $A$ and $B\in \mathbb{C}^{n\times n}$ are symmetric and that $A$ is invertible. 
Then $A$ and $B$ are diagonalizable by congruence if and only if $C = A^{-1}B$ is 
diagonalizable.  
\end{thm}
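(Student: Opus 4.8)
The plan is to prove the two implications separately, exploiting the link between congruence transformations of $A$ and similarity transformations of $C = A^{-1}B$. The ``only if'' direction is routine: if $P$ is invertible with $P^{T}AP = D_{A}$ and $P^{T}BP = D_{B}$ both diagonal, then $D_{A}$ is invertible since $A$ is, so $A^{-1} = P D_{A}^{-1} P^{T}$ and
\[
C = A^{-1}B = P D_{A}^{-1} P^{T} P^{-T} D_{B} P^{-1} = P \bigl( D_{A}^{-1} D_{B} \bigr) P^{-1},
\]
which exhibits $C$ as similar to a diagonal matrix.

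For the ``if'' direction the structural observation I would start from is that $B = AC$ together with $A^{T} = A$ and $B^{T} = B$ forces $AC = C^{T}A$; equivalently, $C$ is self-adjoint for the symmetric (possibly indefinite) bilinear form $(x,y) \mapsto x^{T}Ay$. First I would deduce that eigenvectors of $C$ for distinct eigenvalues are orthogonal with respect to both $x^{T}Ay$ and $x^{T}By$: from $Cx = \lambda x$, $Cy = \mu y$, $\lambda \neq \mu$, one gets $\lambda\, x^{T}Ay = (Cx)^{T}Ay = x^{T}C^{T}Ay = x^{T}ACy = \mu\, x^{T}Ay$, hence $x^{T}Ay = 0$ and then $x^{T}By = x^{T}ACy = \mu\, x^{T}Ay = 0$. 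Since $C$ is diagonalizable, $\mathbb{C}^{n} = E_{1} \oplus \dots \oplus E_{r}$ is the direct sum of its eigenspaces, and this decomposition is $A$-orthogonal.

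Next I would work inside a single eigenspace $E_{k}$. Because $A$ is invertible and $\mathbb{C}^{n}$ is the $A$-orthogonal sum of the $E_{k}$, the restriction of $x^{T}Ay$ to each $E_{k}$ is non-degenerate (a vector of $E_{k}$ that is $A$-orthogonal to $E_{k}$ is $A$-orthogonal to all of $\mathbb{C}^{n}$, hence $0$). A non-degenerate symmetric bilinear form on a finite-dimensional complex space admits an orthonormal basis --- pick a non-isotropic vector, which exists since otherwise polarization makes the form vanish, rescale it (every nonzero complex number is a square), restrict to its orthogonal complement, and iterate. Choosing such a basis $\{v^{(k)}_{j}\}_{j}$ of each $E_{k}$ gives $(v^{(k)}_{i})^{T}A\,v^{(k)}_{j} = \delta_{ij}$, and since $v^{(k)}_{j} \in E_{k}$, also $(v^{(k)}_{i})^{T}B\,v^{(k)}_{j} = \lambda_{k}(v^{(k)}_{i})^{T}A\,v^{(k)}_{j} = \lambda_{k}\delta_{ij}$. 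Collecting all of these vectors as the columns of a matrix $P$ yields an invertible $P$ with $P^{T}AP = I$ and $P^{T}BP = \diag(\lambda_{1},\dots)$ (each $\lambda_{k}$ repeated $\dim E_{k}$ times), both diagonal.

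I expect the main obstacle to be the bookkeeping in the ``if'' direction: verifying non-degeneracy of $x^{T}Ay$ on every eigenspace and diagonalizing an indefinite complex symmetric bilinear form, in particular handling possible isotropic vectors; everything else is standard linear algebra. In the setting actually needed in this paper, $A = K$ is real and positive definite, so $x^{T}Ay$ is a genuine inner product and this delicate step reduces to ordinary Gram--Schmidt orthogonalization.
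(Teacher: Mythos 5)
Your proof is correct. Note, however, that the paper does not actually prove this statement: it is imported verbatim from Horn and Johnson (Theorem 4.5.17a--b) and used as a black box in the proof of Lemma~\ref{lemma:congruence-change-of-variables}, so there is no in-paper argument to compare against. What you have written is essentially the standard textbook proof. The ``only if'' direction is the routine computation $C = P(D_A^{-1}D_B)P^{-1}$ (which, as you implicitly assume, requires the congruence matrix $P$ to be invertible --- the sensible reading of the definition, even though the paper's appendix states ``not necessarily invertible,'' which would make the theorem false). The ``if'' direction correctly isolates the two key points: (i) $B^T = B$ and $A^T = A$ force $C^TA = AC$, so $C$ is self-adjoint for the bilinear form $x^TAy$, whence distinct eigenspaces of $C$ are $A$-orthogonal and the form is non-degenerate on each eigenspace; (ii) a non-degenerate \emph{complex symmetric} bilinear form admits an orthonormal basis, the only delicate step being the existence of a non-isotropic vector, which you dispatch via polarization and the fact that every nonzero complex number is a square. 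Assembling the resulting bases into $P$ gives $P^TAP = I$ and $P^TBP = \diag(\lambda_k)$ simultaneously. This is sound and complete; and your closing remark is apt that in the paper's actual application $A = K$ is real positive definite, so the isotropy issue disappears and the argument reduces to Gram--Schmidt on each eigenspace.
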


\begin{proof}[Proof of Lemma~\ref{lemma:congruence-change-of-variables}]
Assume $K$ and $E$ satisfy the hypotheses of %
Lemma~\ref{lemma:congruence-change-of-variables};  we first show that 
$C = K^{-1}E \in \mathbb{R}^{n\times n}$ is diagonalizable.  We note that $C$ satisfies 
\[
K^{1/2} C K^{-1/2} = K^{-1/2} E K^{-1/2}. 
\]
The right-hand side, above, is symmetric due to the symmetry of $K$ and $E$. 
Thus $C$ is similar to a real, symmetric matrix and is therefore diagonalizable.  
From \cite[4.5.17a-b]{horn1990matrix} there exists an invertible matrix $P \in \mathbb{R}^{n\times n}$ 
such that 
\[
P^T K P = \tilde{D}_K\quad\text{ and }\quad  P^T E P = \tilde{D}_E,
\]
where $\tilde{D}_K$, $\tilde{D}_E\in\mathbb{R}^{n\times n}$ are diagonal matrices.  

Recalling $Q = W \times \cdots \times W$, define the change of variables 
$\tilde{p} = (P^{-1} \otimes I_W) p$ for $p\in Q$ and substitute into 
\eqref{eqn:lemma:congruence-change-of-variables:varform-a} to get 
\begin{equation*}
\inner{S(P \otimes I_W) \tilde{p}}{q} + \inner{T(P \otimes I_W)\tilde{p}}{q} = \inner{f}{q},\quad \forall\,q\in Q.
\end{equation*} 
Writing $q = (P \otimes I_W) (P^{-1} \otimes I_W) q$ and noting that the adjoint operator of $P \otimes I_W$ is $P^T \otimes I_{W^*}$, we have 
\begin{equation*}
\inner{S(P \otimes I_W) \tilde{p}}{q} = \inner{(P^T \otimes I_{W^*}) S (P \otimes I_W) \tilde{p}}{(P^{-1} \otimes I_W) q}.  
\end{equation*}
Since $S = K \otimes A$ we can obtain $(P^T \otimes I_{W^*}) S (P \otimes I_W) = \tilde{D}_K \otimes A$ (by Hadamard product). 
By a similar argument 
\[
\inner{TP\tilde{p}}{q} = \inner{(\tilde{D}_E\otimes B)\tilde{p}}{(P^{-1} \otimes I_W ) q}.
\]
Then $D_S := \tilde{D}_K \otimes A$ and $D_T := \tilde{D}_E \otimes B$ are block diagonal 
operators from $Q$ to $Q^*$.  Finally $\inner{f}{q} = \inner{(P^T \otimes I_{W^*}) f}{(P^{-1} \otimes I_W) q}$ and the variational 
problem \eqref{eqn:lemma:congruence-change-of-variables:varform-b} follows because 
$q\in Q$ is arbitrary.  
\end{proof}

\begin{rmk}
The construction of the matrix $P$, yielding both $P^T A P = D_1$ 
and $P^T B P = D_2$, is straightforward for the case when $C=A^{-1}B$ has $n$ 
distinct eigenvalues.  In this case $C$ has $n$ linearly independent 
eigenvectors; if $\{v_1, \dots, v_n\}$ denote these eigenvectors then 
$P = [v_1, \dots, v_n]$ is the matrix whose $j$-th column is $v_j$.  
When the eigenvalues of $C$ are not distinct: $P$ can be realized as the product 
of block-wise eigenvector matrices.  The general procedure for this case is 
discussed in \cite{horn1990matrix}; an example has been discussed in 
Section~\ref{sec:mpt}.
\end{rmk}

\bibliographystyle{siamplain}
\bibliography{references}

\end{document}